\newtheorem{theorem}{Theorem}
\newtheorem{pf}{Proof}
\newtheorem{lem}{Lemma}
\theoremstyle{remark}
\theoremstyle{definition}
 \theoremstyle{definition}
 \theoremstyle{remark}
 \numberwithin{equation}{section}
\def\bege{\begin{equation}} \def\ende{\end{equation}}
   \def\begr{\begin{eqnarray}}
\def\endr{\end{eqnarray}} 
\def\bege{\begin{equation}} \def\ende{\end{equation}}
\def\begr{\begin{eqnarray}} \def\endr{\end{eqnarray}}
\def\bnum{\begin{enumerate}} \def\enum{\end{enumerate}}
\begin{document}
\title{\sf Compact and order bounded   sum of weighted  differentiation composition operators }


\author{Aakriti
 Sharma}

\address{Department of Mathematics,
Central University of Jammu,
Bagla,
Rahya-Suchani, Samba 181143,
INDIA }
\email{aakritishma321@gmail.com}

\author{Ajay K.~Sharma}
\address{Department of Mathematics,
Central University of Jammu,
Bagla,
Rahya-Suchani, Samba 181143,
INDIA }
\email{aksju\_76@yahoo.com}

\begin{abstract} In this paper, we characterize bounded, compact and order bounded   sum of weighted  differentiation composition operators  from Bergman type spaces to weighted Banach spaces of analytic functions, where
the   sum of weighted differentiation  composition  operators is defined  as $$ S^{n}_{\vec{u},\tau}(f)= \displaystyle\sum_{j=0}^{n}D_{u_{j}
,\tau}^{j}(f), \; \; f \in \mathcal{H}(\mathbb D).$$ Here $\mathcal{H}(\mathbb D)$ is the space of all holomorphic functions on $\mathbb D$, $\vec{u}=\{u_{j}\}_{j=0}^{n}$,  $u_{j} \in \mathcal{H}(\mathbb{D})$, $\tau$ a holomorphic self-map of $\mathbb D$, $f^{(j)}$ the $j$th derivative of $f$ and  weighted  differentiation composition operator  $D_{u_{j},\tau}^{j}$  is defined as $D_{u_{j},\tau}^{j}(f)=u_{j}C_{\tau}D^{j}(f)=u_{j}f^{(j)}\circ\tau, \; \; f \in \mathcal{H}(\mathbb D).$
\end{abstract}

\keywords{Weighted differentiation composition operator; Hardy spaces; Bergman spaces; Compact operator; Order bounded operator} 

\subjclass[2010]{Primary 47B38, 47A55; Secondary 30D55.}

\maketitle
\section{Introduction}  Let $\mathcal{H}(\mathbb{D})$ and $\mathcal{S}(\mathbb{D})$ be the class of holomorphic functions on $\mathbb{D}$ and the class of holomorphic self maps on $\mathbb{D}$ respectively, where $\mathbb{D}$ be the open unit disk in the complex plane $\mathbb{C}$. The
Hardy spaces $H^{p}(\mathbb{D})$ is defined as
$$H^{p}(\mathbb{D})= \{f\in \mathcal{H}(\mathbb{D}):\|f\|_{H_{p}}^{p}=\sup_{0<r<1}\int_{\partial\mathbb{D}}|f(r\zeta)|^{p}dm(\zeta),\; 0<p<\infty\}$$
where $m$ denotes normalised lebesgue measure on boundary $\partial \mathbb{D}$ of $\mathbb{D}$. The Bergman spaces $A_{\alpha}^{p}(\mathbb{D})$ is defined as
$$A_{\alpha}^{p}(\mathbb{D})= \{f\in \mathcal{H}(\mathbb{D}):\|f\|_{A_{\alpha}^{p}}^{p}=\int_{\mathbb{D}}|f(z)|^{p}dA_{\alpha}(z),\; 0<p<\infty,\; \alpha >-1\}$$
where $A$ denotes normalised Area measure on $\mathbb{D}$ and $dA_{\alpha}(z)=(\alpha+1)(1-|z|)^{2}d(A(z))$. With the above defined norms $H^{p}$ and $A_{\alpha}^{p}$ are Banach spaces, for $1\leq p < \infty$ and with the translation-invariant metric defined by $\rho(f,g)=\|f-g\|_{X}^{p}$, $X=H^{p}$ and $A^{p}_{\alpha}$ are complete metric spaces, for $0<p<1$, . Recall that a weight on $\mathbb{D}$ is a continuous function on $\mathbb{D}$ and let it be $\nu$. Then with the norm denoted by $\|\cdot\|_{\nu}^{\infty}$, the weighted Banach space $H_{\nu}^{\infty}$ is defined as
$$H_{\nu}^{\infty}=\{f\in \mathcal{H}(\mathbb{D}):\|f\|_{\nu}^{\infty}=\sup_{z\in \mathbb{D}}\nu(z)|f(z)|<\infty\}.$$
When $\nu(z)=(1-|z|^{2})^{\alpha}$, $\alpha>0$, $H_{(1-|z|^{2})^{\alpha}}^{\infty}=A^{-\alpha}$ known as growth space and when $\nu(z)=1$, $H_{1}^{\infty}=H^{\infty}$ is the space of bounded analytic function with the norm denoted by $\|\cdot\|_{\infty}$.

 For each $a\in \mathbb{D}$, the involutive automorphism that exchanges $a$ and $0$ is denoted by $\sigma_{a}$ and is defined as $\sigma_{a}(z)=\frac{a-z}{1-\bar{a}z}$. Let $d(a,z)=|\sigma_{a}(z)|=|\frac{a-z}{1-\bar{a}z}|$ be the pseudohyperbolic metric on $\mathbb{D}$. For pseudohyperbolic distance between self maps $\phi$ and $\psi$ of $\mathbb{D}$, we use simple notation $d(z)=d(\phi(z),\psi(z))$ .\\
  For Banach spaces $X$ and $Y$, an operator $S:X\longrightarrow Y$ is said to be bounded \rm (\rm resp. Compact \rm) \rm if it maps every bounded set in $X$ into another \rm (\rm resp. a relatively compact set\rm) \rm in $Y$. For quasi-banach space $X$, an operator $S:X\longrightarrow L^{q}(\mu)$, $0<q<\infty$ is said to be order bounded with positive measure $\mu$ on $\mathbb{D}$ or $\partial\mathbb{D}$ if for each $f$ in $B_{X}$ there is a nonnegative element $h$ in $L^q(\mu)$ such that $|Sf|\le h$. We know that $H^q=L^q(m)\cap \mathcal{H}(\mathbb{D})$ and $A_{\beta}^{q}=L^q(A_\beta)\cap \mathcal{H}(\mathbb{D})$, in view of this $S : X \rightarrow A_{\beta}^{q}$ \rm(\rm resp.,$H^{q}$\rm) \rm is order bounded with quasi-banach space $X$ if and only if  for each $f$ in $B_{X}$ there is a function $h$ in $L^{q}(A_{\beta})$ \rm (\rm resp.,$L^{q}(m)$\rm) \rm such that $|Sf|\le h$. Above notation $f\leq g$ reveals that $f(z)\leq g(z)\mu-$a.e.

 Let $\tau \in \mathcal{S}(\mathbb{D})$ and $\vec{u}=\{u_{k}\}_{k=0}^{n}$ belongs to $\mathcal{H}(\mathbb{D})$ . Then the weighted differentiation composition operator is defined as $D_{u_{k},\tau}^{k}(f)=u_{k}C_{\tau}D^{k}(f)=u_{k}f^{(k)}\circ\tau$, where  $D^{k}(f)=f^{(k)}$ is the kth-derivative of $f$ and $C_{\tau}(f)=f\circ\tau$ is the composition operator. When $k=0$, $D_{u_{0},\tau}^{0}(f)=u_{0}C_{\tau}(f)$ is simply weighted composition operator. Now we can define the finite sum of weighted differentiation composition operator as $$S^{n}_{\vec{u},\tau}(f)(z)=\displaystyle\sum_{k=0}^{n}D_{u_{k}
,\tau}^{k}(f)(z)=\sum_{k=0}^{n}u_{k}C_{\tau}D^{k}(f)(z)
=\sum_{k=0}^{n}u_{k}f^{(k)}(\tau)(z)$$
where $f\in\mathcal{H}(\mathbb{D})$.\\
Throughout this paper, being variation in the values of constants at each occurrence we will take notation of constants as $C$. The expression $A\lesssim B$ or $B\gtrsim A$ indicates that there exists a constant $C$ such that $A \le CB$ and the expression $A\asymp B$ indicates that both $A\lesssim B$ and $B\lesssim A$ holds.\\

\section{Preliminaries}
This section contains the collection of some lemmas to be used in our main results. For more details we refers \cite{SMX} and \cite{ST}.\\
Let us consider a common parameter $\gamma$ for the space $X$ defined as follows.
\begin{equation}\label{1a}
 \gamma=\Bigg\{
\begin{array}{cccc}
  0&,\text{ if } X=H^{\infty}\\
  \alpha&,\text{ if } X=A^{-\alpha}\\
  \frac{\alpha+2}{p}&,\text{ if } X=A^{p}_{\alpha} \\
  \frac{1}{p}&,\text{ if } X=H^{p}
\end{array}
\end{equation}
\begin{lem}\label{1l}\cite{SMX} Let $p_{n}(z)=z^{n}$. Then we have that for $n\longrightarrow \infty$
\begin{equation*}
  \|p_n\|_X\asymp\Bigg\{\begin{array}{cc}
                          n^\gamma &, \text{ if } X=H^\infty , A^{-\alpha} \\
                          n^{-(\gamma-\frac{1}{p})} &, \text{ if } X=H^{p}, A_{\alpha}^{p}
                        \end{array}
\end{equation*}
\end{lem}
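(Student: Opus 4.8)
The plan is to compute the $X$-norm of the monomials $p_n(z)=z^n$ directly in each of the four cases and extract the leading-order asymptotics as $n\to\infty$. The unifying thread is that each computation reduces to a one-variable integral (or supremum) that can be evaluated in closed form or estimated by a Beta-function/Stirling argument, and the parameter $\gamma$ from \eqref{1a} is precisely designed to package the four answers into the two displayed formulas.

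I would organize the proof case by case. \emph{First, the growth-space cases} $X=H^\infty$ and $X=A^{-\alpha}$. For $H^\infty$ one has $\|p_n\|_{H^\infty}=\sup_{z\in\mathbb D}|z|^n=1$, which matches $n^\gamma$ with $\gamma=0$. For $A^{-\alpha}$ we must maximize $\nu(z)|z|^n=(1-|z|^2)^\alpha |z|^n$ over $z\in\mathbb D$; writing $t=|z|^2\in[0,1)$ this is $\max_t (1-t)^\alpha t^{n/2}$, a single-variable calculus problem whose maximizer is $t=\tfrac{n}{n+2\alpha}$. Substituting back and taking $n\to\infty$ gives $\|p_n\|_{A^{-\alpha}}\asymp n^{\alpha}=n^{\gamma}$, using $(1-t)^\alpha\sim(2\alpha/n)^\alpha$ and $t^{n/2}\to e^{-\alpha}$. \emph{Second, the integral cases} $X=H^p$ and $X=A^p_\alpha$. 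For $H^p$ the boundary integral gives $\|p_n\|_{H^p}^p=\int_{\partial\mathbb D}|\zeta|^{np}\,dm(\zeta)=1$, so $\|p_n\|_{H^p}=1=n^{0}=n^{-(\gamma-1/p)}$ since $\gamma=1/p$ here. For $A^p_\alpha$ the key computation is
\begin{equation*}
\|p_n\|_{A^p_\alpha}^p=(\alpha+1)\int_{\mathbb D}|z|^{np}(1-|z|^2)^\alpha\,dA(z)
=(\alpha+1)\int_0^1 r^{np+1}(1-r^2)^\alpha\,dr,
\end{equation*}
which after the substitution $s=r^2$ becomes a Beta integral $\tfrac{\alpha+1}{2}B\!\left(\tfrac{np+2}{2},\alpha+1\right)$; Stirling's asymptotics for the Gamma function then yield $\|p_n\|_{A^p_\alpha}^p\asymp n^{-(\alpha+1)}$, hence $\|p_n\|_{A^p_\alpha}\asymp n^{-(\alpha+1)/p}=n^{-(\gamma-1/p)}$ with $\gamma=\tfrac{\alpha+2}{p}$.

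\textbf{The main obstacle} is the Bergman case, where I must pass from the exact Beta-function value to the clean power $n^{-(\alpha+1)/p}$. This requires the asymptotic $B(x,\alpha+1)=\Gamma(x)\Gamma(\alpha+1)/\Gamma(x+\alpha+1)\sim\Gamma(\alpha+1)\,x^{-(\alpha+1)}$ as $x\to\infty$, which follows from $\Gamma(x)/\Gamma(x+c)\sim x^{-c}$; keeping careful track that here $x=\tfrac{np+2}{2}\asymp n$, so the $n$-power is correct and the $p$-dependence enters only through the exponent. The other three cases are essentially immediate once the right one-variable reduction is made, so the whole lemma rests on this single Gamma-ratio estimate together with the elementary maximization in the $A^{-\alpha}$ case.
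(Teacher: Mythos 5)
Your overall strategy is reasonable: the paper itself gives no proof of this lemma (it is simply quoted from \cite{SMX}), so a direct case-by-case computation is a legitimate route, and your treatment of $H^\infty$, $H^p$ and $A^p_\alpha$ is correct. (In the Bergman case you drop a factor of $2$ from the polar-coordinate formula $\int_{\mathbb{D}}g(|z|)\,dA(z)=2\int_0^1 g(r)\,r\,dr$, but that is harmless for $\asymp$; the Gamma-ratio estimate $\Gamma(x)/\Gamma(x+\alpha+1)\sim x^{-(\alpha+1)}$ with $x=\frac{np+2}{2}\asymp n$ indeed gives $\|p_n\|_{A^p_\alpha}\asymp n^{-(\alpha+1)/p}=n^{-(\gamma-\frac{1}{p})}$.)

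However, the $A^{-\alpha}$ case contains a genuine error: your stated conclusion contradicts your own computation. At the maximizer $t=\frac{n}{n+2\alpha}$ you correctly find $(1-t)^\alpha\sim(2\alpha/n)^\alpha$ and $t^{n/2}\to e^{-\alpha}$, which together give $\|p_n\|_{A^{-\alpha}}\asymp n^{-\alpha}$, a \emph{decaying} quantity; yet you then assert ``$\asymp n^{\alpha}=n^{\gamma}$''. Growth like $n^{+\alpha}$ is impossible on its face, since $\|p_n\|_{A^{-\alpha}}=\sup_{z\in\mathbb{D}}(1-|z|^2)^{\alpha}|z|^n\le 1$ for every $n$. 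What your (correct) calculation actually reveals is that the lemma as printed is misstated for $X=A^{-\alpha}$: the right statement is $\|p_n\|_X\asymp n^{-\gamma}$ for $X=H^\infty, A^{-\alpha}$ (for $H^\infty$ the two versions coincide because $\gamma=0$). This corrected form is also the one the paper actually needs: in Theorem \ref{1t}, implication (a)$\Rightarrow$(e) requires $n^{\gamma}\|S^{n}_{\vec{u},\tau}p_n\|_{H_{\nu}^{\infty}}\lesssim n^{\gamma}\|p_n\|_X<\infty$, and in Theorem \ref{3t}, implication (i)$\Rightarrow$(v) uses $n^{\gamma}\asymp 1/\|p_n\|_X$; both fail with the exponent $+\gamma$ and hold with $-\gamma$. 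So rather than forcing your estimate to match the printed exponent, you should prove the $n^{-\gamma}$ version and note the sign typo in the statement.
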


\begin{lem}\label{2l}\cite{SMX}
  Let $X=H^{\infty}, A^{-\alpha}, A_{\alpha}^{p}, H^{p}$. Then there is a non-zero and positive real no. $C$ such that
  $$|f^{(k)}(z)|\leq \frac{C\|f\|_{X}}{(1-|z|^{2})^{k+\gamma}},$$
   for every $f\in X$, $0\leq k\leq n$ and $z\in \mathbb{D}$.
\end{lem}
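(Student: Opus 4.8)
The plan is to first establish the zeroth-order growth estimate
$$|f(z)|\le \frac{C\|f\|_X}{(1-|z|^2)^{\gamma}}, \qquad z\in\mathbb{D},$$
for each of the four spaces, and then to pass to general $k$ by a Cauchy estimate on a disc whose radius is comparable to $1-|z|$. For $X=H^{\infty}$ the case $k=0$ is the definition (with $\gamma=0$ and $C=1$), and for $X=A^{-\alpha}$ it is again immediate from the norm (with $\gamma=\alpha$, $C=1$). The two genuinely nontrivial cases are the Bergman and Hardy spaces, and I would treat them using the subharmonicity of $|f|^{p}$.

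For $X=A^{p}_{\alpha}$ I would fix $z\in\mathbb{D}$, set $r=(1-|z|)/2$, and apply the sub-mean value inequality to the subharmonic function $|f|^{p}$ over the Euclidean disc $D(z,r)$, whose closure lies in $\mathbb{D}$. On $D(z,r)$ one has $1-|w|^2\asymp 1-|z|^2$, so the weight $(1-|w|^2)^{\alpha}$ can be pulled out up to a constant, giving
$$|f(z)|^{p}\lesssim \frac{1}{(1-|z|^2)^{2}}\int_{D(z,r)}|f(w)|^{p}\,dA(w)\lesssim \frac{1}{(1-|z|^2)^{\alpha+2}}\,\|f\|^{p}_{A^{p}_{\alpha}},$$
since the area of $D(z,r)$ is comparable to $(1-|z|^2)^{2}$. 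Taking $p$-th roots yields the claim with $\gamma=(\alpha+2)/p$. For $X=H^{p}$ I would instead use that $|f|^{p}$ admits the harmonic majorant furnished by the Poisson integral of its boundary values, together with the elementary bound $P_{z}(\zeta)\le 2/(1-|z|)\lesssim (1-|z|^2)^{-1}$ for the Poisson kernel, which gives $|f(z)|^{p}\lesssim \|f\|_{H^{p}}^{p}/(1-|z|^2)$ and hence the estimate with $\gamma=1/p$.

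Granting the zeroth-order estimate, I would obtain the case of general $0\le k\le n$ from Cauchy's integral formula for derivatives. Writing
$$f^{(k)}(z)=\frac{k!}{2\pi i}\int_{|w-z|=r}\frac{f(w)}{(w-z)^{k+1}}\,dw, \qquad r=\tfrac{1-|z|}{2},$$
and bounding the integrand by the already-proved estimate for $|f(w)|$ on the circle $|w-z|=r$, where $1-|w|^2\asymp 1-|z|^2$, I obtain
$$|f^{(k)}(z)|\le \frac{k!}{r^{k}}\sup_{|w-z|=r}|f(w)|\lesssim \frac{k!}{(1-|z|^2)^{k}}\cdot\frac{\|f\|_X}{(1-|z|^2)^{\gamma}}.$$
Since $k\le n$, the factor $k!$ is bounded by $n!$ and is absorbed into $C$, establishing $|f^{(k)}(z)|\le C\|f\|_X/(1-|z|^2)^{k+\gamma}$ uniformly in $f$, $z$, and $0\le k\le n$.

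The routine ingredients are the comparabilities $1-|w|^2\asymp 1-|z|^2$ on $D(z,r)$ and $\mathrm{Area}(D(z,r))\asymp(1-|z|^2)^{2}$, which I would verify once and reuse. The main obstacle is packaged into the two subharmonic-majorant arguments for $A^{p}_{\alpha}$ and $H^{p}$: one must choose the radius $r$ proportional to $1-|z|$ so that the target weight is essentially constant on the disc, and in the Hardy case correctly invoke the harmonic majorant and the Poisson bound. Once the $k=0$ estimate is in hand, the passage to derivatives via Cauchy's formula is entirely mechanical.
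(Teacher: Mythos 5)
Your proof is correct. Note, however, that the paper itself gives no argument for this lemma at all: it is quoted with a citation to the reference [SMX] (Wang--Wang--Guo, \emph{Differences of Stevi\'c--Sharma operators}), so there is no in-paper proof to compare against. What you have supplied is the standard self-contained derivation, and every step checks out: the $k=0$ estimate is definitional for $H^{\infty}$ and $A^{-\alpha}$; for $A^{p}_{\alpha}$ the sub-mean-value inequality for $|f|^{p}$ on $D(z,(1-|z|)/2)$, together with $1-|w|^{2}\asymp 1-|z|^{2}$ and $\mathrm{Area}(D(z,r))\asymp(1-|z|^{2})^{2}$ there, gives exactly the exponent $\gamma=(\alpha+2)/p$; for $H^{p}$ the least harmonic majorant of $|f|^{p}$ is the Poisson integral of $|f^{*}|^{p}$ (valid for all $0<p<\infty$, including $p<1$, which is worth stating explicitly since the paper allows quasi-Banach exponents), and your Poisson-kernel bound $P_{z}(\zeta)\lesssim(1-|z|^{2})^{-1}$ is right. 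The passage to derivatives via Cauchy's formula on the circle of radius $(1-|z|)/2$ is also sound, and the uniformity of the constant over $0\le k\le n$ is legitimate precisely because $n$ is fixed, so $k!\,4^{k}\le n!\,4^{n}$ is absorbed into $C$. The practical difference between your route and the paper's is simply that you have made the lemma independent of the literature: what the citation buys the authors in brevity, your argument buys in self-containedness, and it is the same proof one would find by unwinding the reference (these growth estimates are classical, cf.\ the books of Zhu and of Hedenmalm--Korenblum--Zhu).
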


\begin{lem}\label{3l}\
  Let $X=H^{\infty}, A^{-\alpha}, A_{\alpha}^{p}, H^{p}$ and $f_a(z)=\displaystyle\left(\frac{1-|a|^2}{(1-\bar{a}z)^{2}}\right)^{\gamma}$, for any $a,z\in \mathbb{D}$. Then $f_{a}\in X$ and $\|f_{a}\|_{X}\leq 1$. Also $\|f_{a}(\sigma_{a})^{k}\|_{X}\leq 1$ for  $a\in \mathbb{D}$ and $k\in \mathbb{N}$.
\end{lem}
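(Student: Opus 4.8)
The plan is to verify the two norm estimates directly in each of the four spaces, organising the computation around the single parameter $\gamma$ from \eqref{1a}, and to deduce the second assertion from the first. The observation that makes the second bound essentially free is that $\sigma_a$ is a self-map of $\mathbb{D}$, so $|\sigma_a(z)|\le 1$ for every $z\in\mathbb{D}$; hence $|f_a(z)(\sigma_a(z))^k|\le |f_a(z)|$ pointwise. Since each of the four norms is monotone with respect to pointwise modulus (a supremum of $\nu|\cdot|$ in the cases $H^\infty,A^{-\alpha}$ and an integral of $|\cdot|^p$ in the cases $H^p,A^p_\alpha$), this immediately gives $\|f_a(\sigma_a)^k\|_X\le\|f_a\|_X$ once the first estimate is in hand. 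So the real content is the bound $\|f_a\|_X\le 1$.

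For the growth-type spaces I would use the M\"obius identity
\[
1-|\sigma_a(z)|^2=\frac{(1-|a|^2)(1-|z|^2)}{|1-\bar a z|^2}\le 1 ,
\]
valid because $\sigma_a(\mathbb{D})\subset\mathbb{D}$. When $X=H^\infty$ we have $\gamma=0$, so $f_a\equiv 1$ and $\|f_a\|_\infty=1$. When $X=A^{-\alpha}$ we have $\gamma=\alpha$, and
\[
(1-|z|^2)^\alpha|f_a(z)|=\left(\frac{(1-|a|^2)(1-|z|^2)}{|1-\bar a z|^2}\right)^{\alpha}=\bigl(1-|\sigma_a(z)|^2\bigr)^{\alpha}\le 1 ,
\]
and taking the supremum over $z$ gives $\|f_a\|_{A^{-\alpha}}\le 1$.

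For the integral spaces I would exploit that the chosen exponent makes $|f_a|^p$ collapse to a reproducing-type kernel. When $X=H^p$, $\gamma=1/p$ forces $p\gamma=1$, so $|f_a(z)|^p=(1-|a|^2)/|1-\bar a z|^2$; evaluating on the circle of radius $r$ and using the elementary identity $\int_{\partial\mathbb{D}}|1-\bar a r\zeta|^{-2}\,dm(\zeta)=(1-|a|^2r^2)^{-1}$ (expand the kernel as $\sum_{j,k}\bar a^j a^k r^{j+k}\zeta^{j-k}$ and integrate term by term) yields $\int_{\partial\mathbb{D}}|f_a(r\zeta)|^p\,dm(\zeta)=(1-|a|^2)/(1-|a|^2r^2)\le 1$, whose supremum over $0<r<1$ is exactly $1$. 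When $X=A^p_\alpha$, $\gamma=(\alpha+2)/p$ forces $p\gamma=\alpha+2$, so $|f_a(z)|^p=(1-|a|^2)^{\alpha+2}/|1-\bar a z|^{2(\alpha+2)}$, and the claim reduces to the area-integral identity
\[
\int_{\mathbb{D}}\frac{dA_\alpha(z)}{|1-\bar a z|^{2(\alpha+2)}}=\frac{1}{(1-|a|^2)^{\alpha+2}} ,
\]
which gives $\|f_a\|_{A^p_\alpha}^p=1$.

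The only genuinely computational step is this last identity, and it is where I expect the main obstacle to lie, because the statement asks for the sharp constant $1$ rather than merely $\lesssim 1$. I would obtain it by the binomial expansion $(1-\bar a z)^{-(\alpha+2)}=\sum_k \binom{\alpha+1+k}{k}(\bar a z)^k$, multiplying by its conjugate, and integrating monomials against $dA_\alpha$ via $\int_{\mathbb{D}}|z|^{2k}\,dA_\alpha(z)=\Gamma(\alpha+2)\,k!/\Gamma(k+\alpha+2)$; the Gamma factors telescope so that the resulting series sums back to $(1-|a|^2)^{-(\alpha+2)}$. One may instead invoke the standard Rudin--Forelli estimates, but those only deliver $\asymp 1$, so carrying out the exact summation is the cleanest route to the stated bound. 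Everything else is pointwise monotonicity together with the two kernel integrals.
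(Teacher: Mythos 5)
Your proof is correct, and it is genuinely different from what the paper offers: the paper disposes of this lemma in two lines, observing that the $H^\infty$ case is trivial and referring the reader to \cite{SMX}, \cite{CBM} and \cite{KZ} for the remaining spaces, whereas you verify all four cases from scratch. Two features of your argument are worth noting. First, the reduction of the second assertion to the first via the pointwise bound $|f_a(z)(\sigma_a(z))^k|\le |f_a(z)|$ (valid since $|\sigma_a(z)|<1$ on $\mathbb{D}$) together with monotonicity of each norm under pointwise domination of the modulus handles all $k$ at once; for $H^p$ this should be phrased as domination of the integral means on each circle $|z|=r$ before taking the supremum over $r$ --- your parenthetical calling the $H^p$ norm ``an integral'' is a harmless imprecision, and the argument goes through exactly as you use it. Second, your computations give more than the lemma asks: the M\"obius identity yields $\|f_a\|_{A^{-\alpha}}\le 1$ exactly, the circle-integral identity gives $\|f_a\|_{H^p}^p=\sup_{0<r<1}(1-|a|^2)/(1-|a|^2r^2)=1$, and the reproducing-kernel identity $\int_{\mathbb{D}}|1-\bar a z|^{-2(\alpha+2)}\,dA_\alpha(z)=(1-|a|^2)^{-(\alpha+2)}$, which you correctly derive from the binomial expansion and the moment formula $\int_{\mathbb{D}}|z|^{2k}\,dA_\alpha(z)=\Gamma(\alpha+2)\,k!/\Gamma(k+\alpha+2)$, gives $\|f_a\|_{A^p_\alpha}^p=1$ on the nose; you are also right that Forelli--Rudin type estimates would only deliver $\asymp 1$, so the exact summation is the correct route to the constant $1$. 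In short, the paper's citation buys brevity, while your argument buys a self-contained proof with sharp constants and a uniform treatment of the factor $(\sigma_a)^k$.
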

\begin{pf}
  This lemma is trivially hold for $H^\infty$. For the remaining proof we refer the readers \cite{SMX}, \cite{CBM} and \cite{KZ}.
\end{pf}
\begin{lem}\label{4l}
  Let $X=H^{\infty}, A^{-\alpha}, A_{\alpha}^{p}, H^{p}$ and $Y=H^{\infty}_{\nu}$. Then $S^{n}_{\vec{u},\tau}: X \longrightarrow Y$ is compact if and only if $S^{n}_{\vec{u},\tau}: X \longrightarrow Y$ is bounded and $\lim_{n \rightarrow \infty}\|S^{n}_{\vec{u},\tau}(f_n)\|_{H^{\infty}_{\nu}}= 0$ for each bounded sequence $\{f_n\}_{n\in \mathbb{N}}$ in $X$ which is uniformly convergent to zero on compact subsets of $\mathbb{D}$.
\end{lem}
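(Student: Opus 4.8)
The plan is to follow the classical normal-families template for compactness of operators between spaces of analytic functions, treating the two implications separately.

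For necessity, I would first record that a compact operator is automatically bounded. Then, given a bounded sequence $\{f_n\}$ in $X$ with $f_n \to 0$ uniformly on compact subsets of $\mathbb{D}$, suppose toward a contradiction that $\|S^{n}_{\vec{u},\tau}(f_n)\|_{H^{\infty}_{\nu}}$ does not tend to $0$; passing to a subsequence, we may assume it is bounded below by some $\delta > 0$. Compactness of $S^{n}_{\vec{u},\tau}$ yields a further subsequence along which $S^{n}_{\vec{u},\tau}(f_n)$ converges in the norm of $H^{\infty}_{\nu}$ to some $g \in H^{\infty}_{\nu}$. Since for each fixed $z_0 \in \mathbb{D}$ the evaluation $h \mapsto h(z_0)$ is bounded on $H^{\infty}_{\nu}$ (because $\nu(z_0)|h(z_0)| \le \|h\|_{\nu}^{\infty}$ and $\nu(z_0)>0$), norm convergence forces pointwise convergence, so $g(z_0) = \lim_n S^{n}_{\vec{u},\tau}(f_n)(z_0)$. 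On the other hand, uniform convergence of $f_n$ to $0$ on compacta gives, via the Weierstrass theorem, $f_n^{(k)} \to 0$ uniformly on compacta for each $0 \le k \le n$; evaluating at the fixed point $\tau(z_0) \in \mathbb{D}$ shows $S^{n}_{\vec{u},\tau}(f_n)(z_0) \to 0$. Hence $g \equiv 0$, contradicting $\|g\|_{\nu}^{\infty} = \lim_n \|S^{n}_{\vec{u},\tau}(f_n)\|_{H^{\infty}_{\nu}} \ge \delta > 0$.

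For sufficiency, assume boundedness together with the limit condition. Let $\{g_m\}$ be any bounded sequence in $X$, say $\|g_m\|_X \le M$. By Lemma~\ref{2l} with $k=0$, the sequence is uniformly bounded on each compact subset of $\mathbb{D}$, so by Montel's theorem it is a normal family; I would extract a subsequence $\{g_{m_j}\}$ converging uniformly on compacta to an analytic function $g$. A Fatou-type argument (lower semicontinuity of the norm) shows $g \in X$ with $\|g\|_X \le M$. Setting $h_j = g_{m_j} - g$, the sequence $\{h_j\}$ is bounded in $X$ and converges to $0$ uniformly on compacta, so the hypothesis gives $\|S^{n}_{\vec{u},\tau}(h_j)\|_{H^{\infty}_{\nu}} \to 0$, that is, $S^{n}_{\vec{u},\tau}(g_{m_j}) \to S^{n}_{\vec{u},\tau}(g)$ in $H^{\infty}_{\nu}$. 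Thus every bounded sequence in $X$ has an image subsequence convergent in $Y$, which is exactly compactness of $S^{n}_{\vec{u},\tau}$.

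The step I expect to require the most care is the identification of the limit in the necessity direction: one must verify that the $H^{\infty}_{\nu}$-norm limit $g$ coincides with the pointwise limit, which rests on positivity of the weight $\nu$ so that point evaluations are bounded, together with the fact that passage to derivatives preserves uniform convergence on compacta, ensuring $S^{n}_{\vec{u},\tau}(f_n)(z_0) \to 0$ at each point.
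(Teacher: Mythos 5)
Your proposal is correct. Note that the paper itself gives no proof of Lemma~\ref{4l}: it is stated as a known result, with the preliminaries section deferring to the cited literature (this is the classical compactness criterion going back to Cowen--MacCluer and Montes-Rodriguez). Your argument --- necessity via norm convergence of a subsequence plus boundedness of point evaluations on $H^{\infty}_{\nu}$ and the Weierstrass theorem to identify the limit as $0$, sufficiency via the pointwise estimate of Lemma~\ref{2l}, Montel's theorem, lower semicontinuity of the $X$-norm under uniform convergence on compacta, and linearity applied to $h_j = g_{m_j}-g$ --- is exactly the standard proof of that criterion, so it fills the gap the paper leaves to its references rather than diverging from anything in the paper.
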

\section{Boundedness of $ S^{n}_{\vec{u},\tau}$}

This section is devoted to the characterization of the boundedness of the finite sum of weighted composition differentiation operator $S^{n}_{\vec{u},\tau}$ between Banach spaces of an analytic function.

\begin{theorem}\label{1t}\begin{enumerate}\item[{(A)}]Suppose that $X=H^{\infty}, A^{-\alpha}$. Then the following conditions are equivalent:
\begin{enumerate}
\item [{(a)}] $S^{n}_{\vec{u},\tau}: X \longrightarrow H_{\nu}^{\infty}$ is bounded.
\item [{(b)}]$\displaystyle\sum_{k=0}^{n}M_{k}=\displaystyle\sup_{z\in\mathbb{D}}\sum_{k=0}^{n}\frac{\nu(z)|u_{k}
    (z)|}{(1-|\tau(z)|^2)^{k+\gamma}}< \infty$.
\item[{(c)}]$M_{k}=\displaystyle\sup_{z\in\mathbb{D}}\frac{\nu(z)|u_{k}(z)|}{(1-|\tau(z)|^2)^{k+
    \gamma}}< \infty $, for $0\leq k\leq n$.
\item [{(d)}]$\displaystyle\sup_{a \in \mathbb{D}}\|S^{n}_{\vec{u},\tau}(f_{a}(\sigma_{a})^{k})\|_{H_{\nu}^{
    \infty}}< \infty$ for $0\leq k\le n$.
\item [{(e)}]$\displaystyle\sup_{n \in \mathbb{N}}n^{\gamma}\|S^{n}_{\vec{u},\tau}p_{n}\|_{H_{\nu}^{\infty}}< \infty$ where $p_{n}(z)=z^{n}$, $z\in \mathbb{D}$.
\end{enumerate}
\item [{(B)}] Conditions $(a)-(d)$ are equivalent for $X=A_{\alpha}^{p}, H^{p}$.
                     \end{enumerate}
\end{theorem}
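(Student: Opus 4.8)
The plan is to establish Part (A) through the cycle $(c)\Rightarrow(a)\Rightarrow(d)\Rightarrow(c)$ and then close the monomial condition via $(a)\Rightarrow(e)\Rightarrow(c)$; Part (B) will follow by repeating the first cycle verbatim, since Lemmas \ref{2l} and \ref{3l} hold for all four spaces and nothing special to the sup-norm spaces enters there. First I would dispose of $(b)\Leftrightarrow(c)$: writing $a_k(z)=\nu(z)|u_k(z)|/(1-|\tau(z)|^2)^{k+\gamma}\ge0$, the pointwise bound $a_k\le\sum_j a_j$ gives $M_k\le\sup_z\sum_j a_j\le\sum_j M_j$, so all three finiteness notions ($\max_k M_k$, the single supremum, $\sum_j M_j$) coincide because the index set is finite.

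For $(c)\Rightarrow(a)$ I would estimate, for $f\in X$ and pointwise in $z$,
$$\nu(z)\bigl|S^n_{\vec{u},\tau}(f)(z)\bigr|\le\sum_{k=0}^n\nu(z)|u_k(z)|\,\bigl|f^{(k)}(\tau(z))\bigr|\le C\|f\|_X\sum_{k=0}^n\frac{\nu(z)|u_k(z)|}{(1-|\tau(z)|^2)^{k+\gamma}},$$
the second inequality being Lemma \ref{2l} applied at the point $\tau(z)$; taking the supremum over $z$ and invoking $(c)$ yields $\|S^n_{\vec{u},\tau}\|\le C\sum_k M_k<\infty$. For $(a)\Rightarrow(d)$, Lemma \ref{3l} places each $f_a(\sigma_a)^k$ in the unit ball of $X$, whence $\|S^n_{\vec{u},\tau}(f_a(\sigma_a)^k)\|_{H^\infty_\nu}\le\|S^n_{\vec{u},\tau}\|$ uniformly in $a$.

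The hard part is $(d)\Rightarrow(c)$, which I would prove by a downward induction on $k$. Put $g_{a,k}=f_a(\sigma_a)^k$. Because $\sigma_a(a)=0$ with $\sigma_a'(a)=-1/(1-|a|^2)\ne0$, the factor $(\sigma_a)^k$ vanishes to order exactly $k$ at $a$, so by the Leibniz rule $g_{a,k}^{(j)}(a)=0$ for $j<k$, while $g_{a,k}^{(k)}(a)=k!\,f_a(a)\,(\sigma_a'(a))^k$ has modulus $k!/(1-|a|^2)^{k+\gamma}$; moreover Lemma \ref{2l} (with $\|g_{a,k}\|_X\le1$) gives $|g_{a,k}^{(j)}(a)|\le C/(1-|a|^2)^{j+\gamma}$ for every $j$. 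Fixing $z$ and selecting the \emph{parameter} $a=\tau(z)$, all terms with $j<k$ drop out of $S^n_{\vec{u},\tau}(g_{a,k})(z)=\sum_{j=k}^n u_j(z)g_{a,k}^{(j)}(\tau(z))$; isolating the $j=k$ term and multiplying by $\nu(z)$ then gives
$$\frac{k!\,\nu(z)|u_k(z)|}{(1-|\tau(z)|^2)^{k+\gamma}}\le\nu(z)\bigl|S^n_{\vec{u},\tau}(g_{\tau(z),k})(z)\bigr|+C\sum_{j=k+1}^n\frac{\nu(z)|u_j(z)|}{(1-|\tau(z)|^2)^{j+\gamma}}.$$
Taking the supremum over $z$, the first term on the right is at most $\sup_a\|S^n_{\vec{u},\tau}(g_{a,k})\|_{H^\infty_\nu}$, finite by $(d)$, and the sum is $C\sum_{j>k}M_j$, finite by the induction hypothesis (the base case $k=n$ has an empty sum); hence $M_k<\infty$, which closes the induction and yields $(c)$.

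Finally, for the monomial condition in Part (A), $(a)\Rightarrow(e)$ is immediate: $p_m/\|p_m\|_X$ lies in the unit ball, so $\|S^n_{\vec{u},\tau}(p_m)\|_{H^\infty_\nu}\le\|S^n_{\vec{u},\tau}\|\,\|p_m\|_X$, and Lemma \ref{1l} converts the growth of $\|p_m\|_X$ into $\sup_m m^\gamma\|S^n_{\vec{u},\tau}(p_m)\|_{H^\infty_\nu}<\infty$. For the converse $(e)\Rightarrow(c)$ I would again peel off the orders from the top, using $p_m^{(k)}(w)=\frac{m!}{(m-k)!}w^{m-k}$ and the elementary asymptotic $\sup_m m^{s}t^m\asymp(1-t^2)^{-s}$ for $t\in[0,1)$: for $k=n$ the factor $\frac{m!}{(m-n)!}\asymp m^n$ makes that term dominant for large $m$, so optimizing in $m$ recovers the summand of $M_n$, after which the already-controlled higher-order contributions are subtracted to descend to $k=0$. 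I expect this to be the genuinely delicate step, because the monomials do not localize: the order-$k$ term attains its maximum in $m$ at a different scale for each $k$ (roughly $m\approx(\gamma+k)/(1-|\tau(z)|^2)$), so the peeling must be organized inside the supremum over $z$ rather than pointwise as in $(d)\Rightarrow(c)$.
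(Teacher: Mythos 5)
Most of your proposal is correct and, in substance, coincides with the paper's own arguments: your $(c)\Rightarrow(a)$ is the paper's $(b)\Rightarrow(a)$ via Lemma \ref{2l}; your $(a)\Rightarrow(d)$ via Lemma \ref{3l} is exactly how the paper handles part (B); your downward induction for $(d)\Rightarrow(c)$, with the test functions $g_{a,k}=f_a(\sigma_a)^k$ and the choice of parameter $a=\tau(z)$, is the paper's proof essentially verbatim (including the base case $k=n$ with empty error sum); and $(b)\Leftrightarrow(c)$ and $(a)\Rightarrow(e)$ are handled the same way. So the equivalence of (a)--(d), and all of part (B), is complete in your write-up.

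The genuine gap is the implication leading out of (e), which you only sketch and which, as described, would fail. The mechanism ``for $k=n$ the factor $m!/(m-n)!\asymp m^{n}$ makes that term dominant for large $m$, so optimizing in $m$ recovers $M_n$'' is circular: in $S^{n}_{\vec{u},\tau}(p_m)(z)=\sum_{k=0}^{n}u_k(z)\frac{m!}{(m-k)!}\tau(z)^{m-k}$, whether the top-order term dominates at a given $m$ depends on the relative sizes of the $|u_k(z)|$, which are precisely the quantities you are trying to bound, and nothing prevents exact cancellation among the terms at the scale $m\approx(\gamma+n)/(1-|\tau(z)|^2)$ where you want to read off $M_n$. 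To break this circularity you would need a genuine extraction device: either a differencing/Vandermonde argument using $n+1$ comparable scales $m_i\approx i/(1-|\tau(z)|^2)$ and inverting the resulting linear system with constants uniform in $z$ (this can be made to work), or the paper's route, which proves $(e)\Rightarrow(d)$ rather than $(e)\Rightarrow(c)$: since $a-\sigma_a(z)=z(1-|a|^2)/(1-\bar{a}z)$, one has $f_a(a-\sigma_a)^k=(1-|a|^2)^{\gamma+k}z^k(1-\bar{a}z)^{-(2\gamma+k)}$, and expanding $(1-\bar{a}z)^{-(2\gamma+k)}$ by the binomial series, whose coefficients are \emph{nonnegative}, gives term by term
\[
\big|S^{n}_{\vec{u},\tau}\big(f_a(a-\sigma_a)^{k}\big)(z)\big|\lesssim(1-|a|)^{\gamma+k}\sum_{l\ge k}\frac{\Gamma(l+\gamma)}{(l-k)!\,\Gamma(\gamma+k)}\,l^{\gamma}|a|^{l-k}\big|S^{n}_{\vec{u},\tau}(p_l)(z)\big|\lesssim\sup_{m}m^{\gamma}\big|S^{n}_{\vec{u},\tau}(p_m)(z)\big|,
\]
the last step using $\sum_{l\ge k}\frac{\Gamma(l+\gamma)}{(l-k)!\Gamma(\gamma+k)}|a|^{l-k}=(1-|a|)^{-(\gamma+k)}$; positivity of the coefficients makes the triangle inequality lossless, so the cancellation problem never arises, and your already-complete $(d)\Rightarrow(c)$ then finishes the cycle. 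As it stands, your proposal attaches (e) to the other conditions in one direction only, so Part (A) is incomplete.
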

\begin{proof}
Proof of the part (A):\\ $ (a)\Rightarrow (e)$ This implication trivially holds by lemma \ref{1l}.\\
$(e)\Rightarrow(d)$ Suppose that $\sup_{n \in \mathbb{N}}n^{\gamma}\|S^{n}_{\vec{u},\tau}p_{n}\|_{H_{\nu}^{\infty}}< \infty$ where $p_{n}(z)=z^{n}$, $z\in \mathbb{D}$.
We have to show that $\sup_{z\in\mathbb{D}}\|S^{n}_{\vec{u},\tau}(f_{a}(\sigma_{a})^{k})\|_{H_
{\nu}^{\infty}}< \infty$ for $0\leq k\le n$. If $\gamma\ge0$ and  $0\leq k\le n$, then by binomial series formula and Stirling's formula, we have that
\begin{equation}\label{eq}
\frac{1}{(1-\bar{a}z)^{2\gamma+k}}=\displaystyle\sum_{l=k}^{\infty}
\frac{\Gamma(l+2\gamma)}{(l-k)!\Gamma(2\gamma+k)}\bar{a}^{l-k}z^
{l-k}\asymp \displaystyle\sum_{l=k}^{\infty}\frac{\Gamma(l+\gamma)}{(l-k)!\Gamma(k+\gamma)}l^{\gamma}\bar{a}^{l-k}z^{l-k}
\end{equation}
and\begin{equation}\label{equ}
    \frac{1}{(1-|a|)^{\gamma+k}}=\displaystyle\sum_{l=k}^{\infty}\frac{\Gamma(l+\gamma)}{(l-k)!\Gamma(\gamma+k)}|a|^{l-k}
   \end{equation}
 For $k=0$, $\gamma\ge0$, by (\ref{eq}) and (\ref{equ}), we have that
\begin{eqnarray}\label{2a}
\big|S^{n}_{\vec{u},\tau}(f_{a})(z)\big|&= &\bigg|S^{n}_{\vec{u},\tau}\left(\frac{1-|a|^{2}}{(1-
\bar{a}z)^{2}}\right)^{\gamma}\bigg|\nonumber\\
&\asymp &(1-|a|^{2})^{\gamma}\bigg|S^{n}_{\vec{u},\tau}
\bigg(\sum_{l=0}^{\infty}\frac{\Gamma(l+\gamma)}{l!\Gamma(\gamma)}
l^{\gamma}\bar{a}^{l}z^l\bigg)\bigg|\nonumber\\
&\leq &(1-|a|^{2})^{\gamma}\sum_{l=0}^{\infty}\frac{\Gamma(l+\gamma)}{l!\Gamma(\gamma)}|a|^{l}l^
{\gamma}\big|S^{n}_{\vec{u},\tau}(p_{l})\big|\nonumber\\
&\leq & \sup_{n\in \mathbb{N}}n^{\gamma}\big|S^{n}_{\vec{u},\tau}(p_n)(z)\big|.
\end{eqnarray}
Since
\begin{eqnarray}\label{3a}
  (a-\sigma_a)^{k}& =&\sum_{j=0}^{k}\binom{k}{j}a^{j}(-\sigma_a)^{k-j}\notag\\
  &=&(-\sigma_a)^{k}+\sum_{j=0}^{k-1}\binom{k}{j}a^{j}
  (-\sigma_a)^{k-j}.
\end{eqnarray}
Therefore for $1\leq k\leq n$, in order to prove $$\sup_{a\in\mathbb{D}}\big\|S^{n}_{\vec{u},\tau}\left(f_{a}\sigma_a^{k}
\right)\big\|\leq\sup_{n\in\mathbb{N}}n^{\gamma}\big\|
S^{n}_{\vec{u},\tau}(p_{n})
\big\| $$
we only need to prove $$\sup_{a\in\mathbb{D}}\big|S^{n}_{\vec{u},\tau}\left(f_{a}(a-\sigma_a)^{k}\right)(z)
\big|\leq\sup_{n\in\mathbb{N}}n^{\gamma}\big
|S^{n}_{\vec{u},\tau}(p_{n})(z)\big|.$$
If $1\leq k\leq n$, then by binomial series formula, we have that \begin{equation}\label{equa}
\frac{1}{\left(1-|a|\right)^{k}}=\displaystyle\sum_{l=k}^{\infty}\binom{l-1}
{k-1}|a|^{l-k}
\end{equation}
Thus, if $\gamma=0$, then by (\ref{equa}), we have that
\begin{align}\label{4a}
  \big|S^{n}_{\vec{u},\tau}((a-\sigma_{a})^{k})(z)\big|
  &=\bigg|S^{n}_{\vec{u},\tau}\bigg(\frac{(1-|a|^{2})^{k}z^k}{(1-
\bar{a}z)^{k}}\bigg)\bigg|\notag\\
  &=\bigg|S^{n}_{\vec{u},\tau}\bigg(\big(1-|a|^{2}\big)^{k}\sum_{l=k}^{\infty}
  \binom{l-1}{k-1}\bar{a}^{l-k}z^{l}\bigg)\bigg|\notag\\
  &\leq\big(1-|a|\big)^{k}\sum_{l=k}^{\infty}\binom{l-1}{k-1}|a|^
  {l-k}\big|S^{n}_{\vec{u},\tau}(p_{l})(z)\big|\notag\\
  &\lesssim \sup_{n\in\mathbb{N}}\big|S^{n}_{\vec{u},\tau}(p_{n})(z)\big|.
\end{align}
If $\gamma>0$, then by (\ref{eq}) and(\ref{equ}), we have that
\begin{align}\label{5a}
  \big|S^{n}_{\vec{u},\tau}\left(f_{a}(a-\sigma_a)^{k}\right)(z)\big|
  &=\bigg|S^{n}_{\vec{u},\tau}\left(\frac{(1-|a|^{2})^{\gamma+k}z^k}{(1-
\bar{a}z)^{2\gamma+k}}\right)\bigg|\notag\\
  &\asymp\bigg|S^{n}_{\vec{u},\tau}\bigg((1-|a|^{2})^{\gamma+k}\sum_{l=k}^{\infty}
  \frac{\Gamma(l+\gamma)}{(l-k)!\Gamma(\gamma+k)}l
  ^{\gamma}\bar{a}
  ^{l-k}z^{l}\bigg)\bigg|\notag\\
  &\leq(1-|a|)^{\gamma+k}\sum_{l=k}^{\infty}\frac{
  \Gamma(l+\gamma)}{(l-k)!\Gamma(\gamma+k)}|a|^{l-k}l
  ^{\gamma}\big|S^{n}_{\vec{u},\tau}(p_{l})(z)\big|\notag\\
  &\lesssim \sup_{n\in\mathbb{D}}n^{\gamma}\big|S^{n}_{\vec{u},\tau}(p_{n})(z)\big|.
  \end{align}
  Inequalities (\ref{2a}), (\ref{4a}) and (\ref{5a}), reveals that $$ \sup_{z\in\mathbb{D}}\big|S^{n}_{\vec{u},\tau}(f_{a}(\sigma_{a})^{k})\big|_{H_
{\nu}^{\infty}}\leq \sup_{n \in \mathbb{N}}\big|S^{n}_{\vec{u},\tau}(p_{n})\big|_{H_{\nu}^{\infty}}<\infty$$
This proves the implication $(a)\Rightarrow (d)$.\\

$(d)\Rightarrow(c)$ Suppose that $\sup_{a\in \mathbb{D}}\big\|S^{n}_{\vec{u},\tau}(f_{a}\sigma_{a}^{k})\big\|< \infty$, $0\leq k\leq n$. We have to show that, for each $0\leq k\leq n$,
$M_k=\displaystyle\sup_{z\in\mathbb{D}}\frac{\nu(z)|u_{k}(z)|}{\left(1-|\tau(z)|^{2}\right)^{k+\gamma}}< \infty$
For $k=n$, we consider $g_{n}(z)=\big(f_{\tau(z)}\sigma_{\tau(z)}^{n}\big)(z)$, for all $z\in \mathbb{D}$ such that $g_{n}^{(1)}
(\tau(z))=g_{n}^{(2)}
(\tau(z))=\cdots=g_{n}^{(n)}
(\tau(z))=0$ and $g_{n}^{(n)}
(\tau(z))=\displaystyle\frac{n!}{\left(1-|\tau(z)|^{2}\right)
^{n+\gamma}}$. Thus
$$\big|S_{\vec{u},\tau}(g_{n})(z)\big|=\bigg|\sum_{i=0}^{n}u_{i}(z)g_{n}^{(i)}
(\tau(z))\bigg|=\frac{n!|u_{n}(z)|}{\big(1-|\tau(z)|^{2}\big)
^{n+\gamma}}$$
This implies that,
\begin{equation}\label{6a}
M_{n}\lesssim \sup_{a\in\mathbb{D}}\|S^{n}_{\vec{u},\tau}(f_{a}(\sigma_{a})^{n})\|_{H_{\nu}
^{\infty}}< \infty
\end{equation}
Next, for $k=n-1$, we consider $g_{n-1}(z)=\big(f_{\tau(z)}\sigma_{\tau(z)}^{n-1}\big)
(z)$, $z\in \mathbb{D}$ such that $g_{n-1}^{(1)}
(\tau(z))=g_{n-1}^{(2)}
(\tau(z))=\cdots=g_{n-1}^{(n-2)}
(\tau(z))=0$ and $g_{n-1}^{(n-1)}
(\tau(z))=\displaystyle\frac{(n-1)!}{\left(1-|\tau(z)|^{2}\right)
^{n-1+\gamma}}$. By lemma[\ref{3l}], $\|g_{n-1}\|_{X}\leq 1$. Thus, by lemma[\ref{2l}]
\begin{align}\label{7a}
  \big|S^{n}_{\vec{u},\tau}(g_{n-1})(z)\big|&= \bigg|\sum_{i=0}^{n}u_{i}(z)g_{n-1}^{(i)}(\tau(z))\bigg| \notag\\
  &\geq \big|u_{n-1}(z)g_{n-1}^{(n-1)}(\tau(z))\big|-\big|u_{n}(z)
  g_{n-1}^{(n)}(\tau(z))\big|\notag\\
  &\geq \frac{(n-1)!|u_{n-1}(z)|}{\left(1-|\tau(z)|^{2}\right)^
  {n-1+\gamma}}-\frac{C|u_{n}(z)|\|g_{n-1}\|_X}
  {\left(1-|\tau(z)|^{2}\right)^{n+\gamma}}.
\end{align}
By (\ref{6a}) and (\ref{7a}), we have that
\begin{align}\label{8a}
  M_{n-1}&=\sup_{z\in\mathbb{D}}\frac{\nu(z)|u_{n-1}(z)|}{\left(1-|\tau(z)|^{2}\right)^
  {n-1+\gamma}}\notag\\
  &\lesssim C\sup_{z\in\mathbb{D}}\frac{\nu(z)|u_{n}(z)|\|g_{n-1}\|_X}
  {\left(1-|\tau(z)|^{2}\right)^{n+\gamma}}+\sup_{z\in\mathbb{D}}\nu(z)\big|S_{\vec{u},\tau}(g_{n-1})(z)\big|\notag\\ &\leq C M_n+ \sup_{a\in\mathbb{D}}\big\|S^{n}_{\vec{u},\tau}(f_{a}(\sigma_{a})^{n-1})\big\|_
  {H_{\nu}^{\infty}}\notag\\
  &< \infty.
\end{align}
Using  lemma \ref{2l}, (\ref{6a}) and (\ref{8a}) , similarly we can proceed for $k=n-2$, and we find that
 \begin{equation}\label{9a}M_{n-2}< \infty
 \end{equation}
 Further, for $j>k$ assume that
 \begin{equation}\label{10a}
   M_{j}<\infty
 \end{equation}
For $k$, consider $g_{k}(z)=\big(f_{\tau(z)}\sigma_{\tau(z)}^{k}\big)(z)$ such that $g_{k}^{(1)}
(\tau(z))=g_{k}^{(2)}
(\tau(z))=\cdots=g_{k}^{(k-1)}
(\tau(z))=0$ and $g_{k}^{(k)}
(\tau(z))=\displaystyle\frac{k!}{\left(1-|\tau(z)|^{2}\right)
^{k+\gamma}}$. By lemma \ref{3l}, $\|g_{k}\|_{X}\leq 1$. Thus, by lemma \ref{2l}, we have that
\begin{align*}
 \big|S^{n}_{\vec{u},\tau}(g_{k})(z)\big|&= \bigg|\sum_{i=0}^{n}u_{i}(z)g_{k}^{(i)}(\tau(z))\bigg| \notag\\
  &\geq \big|u_{k}(z)g_{k}^{(k)}(\tau(z))\big|-\sum_{i=k+1}^{n}\big|u_{i}(z)(z)g_{k}^
  {(i)}(\tau(z))\big|\notag\\
  &\geq \frac{k!|u_{k}(z)|}{\left(1-|\tau(z)|^{2}\right)^{k+
  \gamma}}-\sum_{i=k+1}^{n}\frac{C|u_{i}(z)|\|g_{k}\|_X}
  {\left(1-|\tau(z)|^{2}\right)^{i+\gamma}}.
\end{align*}
Using (\ref{6a}), (\ref{8a}), (\ref{9a}) and (\ref{10a}), we find that
\begin{align*}
  M_{k}&=\sup_{z\in\mathbb{D}}\frac{\nu(z)|u_{k}(z)|}{\left(1-|\tau(z)|^{2}\right)^{k+
  \gamma}}\notag\\
  &\lesssim \sum_{i=k+1}^{n}C\sup_{z\in\mathbb{D}}\frac{\nu(z)|u_{i}(z)|\|g_{k}\|_X}
  {\left(1-|\tau(z)|^{2}\right)^{i+\gamma}} +\sup_{z\in\mathbb{D}}\nu(z)\big|S^{n}_{\vec{u},\tau}(g_{k})(z)\big|\notag\\
  &\leq\sum_{i=k+1}^{n} C M_i+ \sup_{a\in\mathbb{D}}\big\|S^{n}_{\vec{u},\tau}(f_{a}(\sigma_{a})^{k})\big\|_{H_
  {\nu}^{\infty}}\notag\\
  &< \infty.
\end{align*}
This proves the implication $(d)\Rightarrow(c)$.\\
$(c)\Rightarrow (b)$ This implication is trivial one.\\
$(b)\Rightarrow (a)$ Assume that $\sum_{k=0}^{n}M_{k}=\displaystyle\sup_{z\in\mathbb{D}}\sum_{k=0}^{n}\frac{\nu(z)|u_{k}
    (z)|}{(1-|\tau(z)|^2)^{k+\gamma}}=M$. By lemma\ref{2l}, we have that
\begin{align*}
 \nu(z)\big|S^{n}_{\vec{u},\tau}(f)(z)\big|&= \nu(z)\bigg|\sum_{k=0}^{n}u_{k}(z)f^{(k)}(\tau(z))\bigg| \notag\\
  &\leq \sum_{k=0}^{n}\nu(z)|u_{k}(z)||f^{(k)}(\tau(z))| \notag\\
   &\leq\sum_{k=0}^{n}\frac{C\nu(z)|u_{k}(z)|\|f\|_{X}}{\left(1-
   |\tau(z)|^{2}\right)^{k+\gamma}}
\end{align*}
Thus, we have that $\big\|S^{n}_{\vec{u},\tau}f\big\|_{H_{\nu}^{\infty}(z)} \lesssim M\big\|f\big\|_{X}$ which implies that $S^{n}_{\vec{u},\tau}:X \longrightarrow H_{\nu}^{\infty}$ is bounded.\\
Proof of the part(B):\\
The proofs of the implications $(d)\Rightarrow (c)\Rightarrow (b)\Rightarrow (a)$ are the same as the proof of part (A).The proof of the implication $(a)\Rightarrow (d)$ follows from lemma\ref{3l}.
This completes the proof of the theorem\ref{1t} .\end{proof}

\section{Compactness of $ S^{n}_{\vec{u},\tau}$}\label{sec:prelimi}
This section is devoted to characterizing the  Compactness of the finite sum of the weighted composition differentiation operator.

\begin{theorem}\label{2t} \begin{enumerate}
                       \item [{(A)}]Suppose that $X=H^{\infty}, A^{-\alpha}$. Then the following conditions are equivalent:
                           \begin{enumerate}
\item [{(i)}] $S^{n}_{\vec{u},\tau}: X \longrightarrow  H_{\nu}^{\infty}$ is compact.
\item[{(ii)}]$\sum_{k=0}^{n}G_{k}=\displaystyle\sum_{k=0}^{n}\lim_{|\tau(z)|\rightarrow 1}\frac{\nu(z)|u_{k}(z)|}
    {(1-|\tau(z)|^2)^{k+\gamma}}=0$.
\item [{(iii)}]$G_{k}=\displaystyle\lim_{|\tau(z)|\rightarrow 1}\frac{\nu(z)|u_{k}(z)|}{(1-|\tau(z)|^2)^{k+\gamma}}=0$, for $0\leq k\leq n$.
\item [{(iv)}]$\lim_{|a|\rightarrow 1}\|S^{n}_{\vec{u},\tau}(f_{a}(\sigma_{a})^{k})\|_{H_{\nu}^{\infty}}=0$ for $0\leq k\le n$.
\item [{(v)}]$\lim_{n\rightarrow \infty}n^{\gamma}\|S^{n}_{\vec{u},\tau}(p_{n})\|_{H_{\nu}^{\infty}}=0$ where $p_{n}(z)=z^{n}$, $z\in \mathbb{D}$.
\end{enumerate}

\item [{(B)}] Conditions $(i)-(iv)$ are equivalent for $X=A_{\alpha}^{p}, H^{p}$.
                     \end{enumerate}
\end{theorem}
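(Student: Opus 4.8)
The plan is to prove part (A) through the cycle $(i)\Rightarrow(v)\Rightarrow(iv)\Rightarrow(iii)\Rightarrow(ii)\Rightarrow(i)$, which is the ``little-oh'' analogue of the chain used for Theorem \ref{1t}: every supremum there is replaced by the corresponding limit. For $(i)\Rightarrow(v)$ I would test compactness against the normalized monomials $q_n=p_n/\|p_n\|_X$. By Lemma \ref{1l} these are bounded in $X$, and since on each disk $\{|z|\le r\}$ one has $|q_n(z)|\le r^n/\|p_n\|_X\to0$, the sequence tends to zero uniformly on compact subsets of $\mathbb D$; Lemma \ref{4l} then forces $\|S^{n}_{\vec{u},\tau}q_n\|_{H_{\nu}^{\infty}}\to0$, which, after renormalizing via the asymptotics of Lemma \ref{1l}, is precisely condition $(v)$.

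For $(v)\Rightarrow(iv)$ I would reuse verbatim the Stirling expansions $(\ref{eq})$, $(\ref{equ})$, $(\ref{equa})$ and the pointwise estimates $(\ref{2a})$, $(\ref{4a})$, $(\ref{5a})$ from the proof of Theorem \ref{1t}, but now split each series $\sum_{l\ge k}$ into a head $k\le l\le N$ and a tail $l>N$. Given $\varepsilon>0$, condition $(v)$ lets me choose $N$ so that $\nu(z)\,l^{\gamma}|S^{n}_{\vec{u},\tau}(p_l)(z)|<\varepsilon$ for all $l>N$, uniformly in $z$; the tail is then bounded by $\varepsilon$ times the same convergent series that in Theorem \ref{1t} summed to $(1-|a|^2)^{\gamma+k}(1-|a|)^{-(\gamma+k)}\le2^{\gamma+k}$, while the head is a fixed finite sum carrying the prefactor $(1-|a|)^{\gamma+k}$, which tends to $0$ as $|a|\to1$. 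Letting $|a|\to1$ and then $\varepsilon\to0$ yields $(iv)$.

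The implication $(iv)\Rightarrow(iii)$ I would run as a downward induction on $k$, following the proof of $(d)\Rightarrow(c)$. Fixing $z$, setting $a=\tau(z)$, and using the localized test function $g_k=f_{\tau(z)}\sigma_{\tau(z)}^{k}$ (which lies in the unit ball of $X$ by Lemma \ref{3l} and whose derivatives of order $<k$ vanish at $\tau(z)$), the identity $\nu(z)\,|S^{n}_{\vec{u},\tau}(g_n)(z)|=n!\,\nu(z)|u_n(z)|\,(1-|\tau(z)|^2)^{-(n+\gamma)}$ together with $(iv)$ gives $G_n=0$; at the general step one isolates the term $u_k g_k^{(k)}(\tau(z))$, estimates the higher-order ``overshoot'' terms $u_i g_k^{(i)}(\tau(z))$, $i>k$, by Lemma \ref{2l}, and absorbs them using the already-established $G_{k+1}=\cdots=G_n=0$, obtaining $G_k=0$. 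The equivalence $(iii)\Leftrightarrow(ii)$ is immediate since each $G_k\ge0$. Finally, for $(ii)\Rightarrow(i)$ I would check the criterion of Lemma \ref{4l}: boundedness follows from Theorem \ref{1t} (the vanishing of each $G_k$ forces $M_k<\infty$ via continuity of $\nu(z)|u_k(z)|(1-|\tau(z)|^2)^{-(k+\gamma)}$ on $\{|\tau(z)|\le r\}$), and for a bounded sequence $\{f_m\}\subset X$ with $f_m\to0$ on compacta I would split the bound for $\nu(z)|S^{n}_{\vec{u},\tau}f_m(z)|$ at $|\tau(z)|>r$ versus $|\tau(z)|\le r$: on the first region Lemma \ref{2l} and the smallness of $G_k$ give a contribution $\lesssim\varepsilon\sup_m\|f_m\|_X$, while on the compact region the Cauchy estimates send each $f_m^{(k)}\to0$ uniformly, giving an $o(1)$ contribution as $m\to\infty$.

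For part (B) the same cycle works with $(v)$ omitted, the only change being that $(i)\Rightarrow(iv)$ now follows directly from Lemma \ref{3l}: for $X=A_{\alpha}^{p},H^{p}$ one has $\gamma>0$, so $\{f_a\sigma_a^{k}\}$ is bounded in $X$ and tends to zero uniformly on compacta as $|a|\to1$, whence Lemma \ref{4l} gives $(iv)$ at once. This also explains why the monomial condition $(v)$ is retained only in part (A): for $H^{\infty}$ ($\gamma=0$) the functions $f_a\sigma_a^{k}$ do not vanish on compact sets, so the direct route is unavailable and one must pass through $(v)$. I expect the main obstacle to be the $(v)\Rightarrow(iv)$ step, where the head/tail splitting must be arranged so that the threshold $N=N(\varepsilon)$ and the limit $|a|\to1$ do not interfere, together with the bookkeeping of the downward recursion in $(iv)\Rightarrow(iii)$, where at each stage the non-vanishing higher-order derivative terms must be absorbed by the previously established limits.
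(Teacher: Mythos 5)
Your proposal is correct and follows essentially the same route as the paper: the same cycle $(i)\Rightarrow(v)\Rightarrow(iv)\Rightarrow(iii)\Rightarrow(ii)\Rightarrow(i)$, the same head/tail splitting of the Stirling expansions for $(v)\Rightarrow(iv)$, the same downward induction with the test functions $g_k=f_{\tau(z)}\sigma_{\tau(z)}^{k}$ for $(iv)\Rightarrow(iii)$, the same $|\tau(z)|\le r$ versus $|\tau(z)|>r$ splitting via Lemma \ref{4l} for $(ii)\Rightarrow(i)$, and the same use of Lemma \ref{3l} for part (B). If anything, your treatment of $(i)\Rightarrow(v)$ via the normalized monomials $q_n=p_n/\|p_n\|_X$ converging to zero on compacta is spelled out more carefully than the paper's one-line citation.
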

\begin{proof}$(i)\Rightarrow (v)$ This implication is trivially holds by lemma \ref{2l}.\\
$(v)\Rightarrow (iv)$ Suppose that $\lim_{n\rightarrow \infty}n^{\gamma}\|S^{n}_{\vec{u},\tau}p_{n}\|_{H_{\nu}^{\infty}}=0$ where $p_{n}(z)=z^{n}$, $z\in \mathbb{D}$. we have to show that $\lim_{|a|\rightarrow 1}\|S^{n}_{\vec{u},\tau}(f_{a}(\sigma_{a})^{k})\|_{H_{\nu}^{\infty}}=0$ for $0\leq k\le n$.\\
For $k=0$, by using (\ref{eq}), we get that
\begin{eqnarray}\label{4b}
  \big\|S^{n}_{\vec{u},\tau}(f_{a})(z)\big\|_
  {H_{\nu}^{\infty}}&=&\bigg\|S^{n}_{\vec{u},\tau}\bigg(\frac{1-|a|^{2}}{(1-
\bar{a}z)^{2}}\bigg)^{\gamma}\bigg\|_
  {H_{\nu}^{\infty}}\nonumber\\
 &\asymp &(1-|a|^{2})^{\gamma}\bigg\|S^{n}_{\vec{u},\tau}\bigg(\sum_{l=0}^{\infty}
 \frac{\Gamma(l+\gamma)}{l!\Gamma(\gamma)}l^{\gamma}
 \bar{a}^{l}z^l\bigg)\bigg\|_{H_{\nu}^{\infty}}\nonumber\\
 &\le &(1-|a|^{2})^{\gamma}\sum_{l=0}^{\infty}\frac{\Gamma
 (l+\gamma)}{l!\Gamma(\gamma)}l^{\gamma}|a|^{l}\big|S^{n}_{\vec{u},\tau}(p_{l})\big|_
 {H_{\nu}^{\infty}}\nonumber\\
 &= &(1-|a|^{2})^{\gamma}\sum_{l=0}^{N}\frac{\Gamma
 (l+\gamma)}{l!\Gamma(\gamma)}l^{\gamma}|a|^{l}\big\|S^{n}_{\vec{u},\tau}(p_{l})\big\|_{H_{\nu}^{\infty}}\notag\\
 &\quad &+(1-|a|^{2})^{\gamma}\sum_{l=N+1}^{\infty}\frac{\Gamma
 (l+\gamma)}{l!\Gamma(\gamma)}l^{\gamma}|a|^{l}l^{\gamma}\big|
 S^{n}_{\vec{u},\tau}(p_{l})\big|_{H_{\nu}^{\infty}}\nonumber\\
 &\leq &(1-|a|^{2})^{\gamma}\sum_{l=0}^{N}\frac{\Gamma(l+2
 \gamma)}{l!\Gamma(2l)}l^{\gamma}|a|^{l}\big|S^{n}_{\vec{u},\tau}(p_{l})\big|_{H_{\nu}^
 {\infty}}\nonumber\\
 &\quad &+(1-|a|^{2})^{\gamma}\bigg(\sum_{l=0}^{\infty}\frac{\Gamma
 (l+\gamma)}{l!\Gamma(\gamma)}|a|^{l}\bigg)\sup_{n>N}n^
 {\gamma}\big|S^{n}_{\vec{u},\tau}(p_{n})\big|_{H_{\nu}^{\infty}}.
\end{eqnarray}
Since $\gamma>0$ and $N$ is arbitrary, using (\ref{equ}),  we get that
$$\lim_{|a|\rightarrow 1}\big\|S^{n}_{\vec{u},\tau}(f_{a})\big\|_{H_{\nu}^{\infty}}\leq \sup_{n>N}n^{\gamma}\big\|S^{n}_{\vec{u},\tau}(p_{n})\big\|_{H_{\nu}^{\infty}}$$
implies
$$\lim_{|a|\rightarrow 1}\|S^{n}_{\vec{u},\tau}(f_{a})\|_{H_{\nu}^{\infty}}\leq\lim_{n\rightarrow \infty}n^{\gamma}\big\|S^{n}_{\vec{u},\tau}(p_{n})\big\|_{H_{\nu}^{\infty}}=0.$$
Next, assume that $\lim_{|a|\rightarrow 1}\big\|S^{n}_{\vec{u},\tau}(f_{a}(\sigma_{a})^{j})\big\|_{H_{\nu}^{\infty}}=0$
holds for , $j<k$. So, in order to prove $$\lim_{|a|\rightarrow 1}\big\|S^{n}_{\vec{u},\tau}(f_{a}(\sigma_{a})^{k})(z)\big\|_{H_{\nu}^{\infty}}=0,$$ we need only to prove $$\lim_{|a|\rightarrow 1}\big|S^{n}_{\vec{u},\tau}(f_{a}(a-\sigma_{a})^{k})\big|=0.$$\\
For arbitrary $N\in \mathbb{N}$ and $N\geq k$, using(\ref{eq}) we have that
\begin{eqnarray}\label{5b}
  \big\|S^{n}_{\vec{u},\tau}(f_{a}(a-\sigma_{a})^{k})\big\|_
  {H_{\nu}^{\infty}}&= &\bigg\|S^{n}_{\vec{u},\tau}\left(\frac{(1-|a|^{2})^
  {\gamma+k}z^k}{(1-\bar{a}z)^{2\gamma+k}}\right)\bigg\|_
  {H_{\nu}^{\infty}}\nonumber\\
 &\asymp &\bigg\|S^{n}_{\vec{u},\tau}\bigg((1-|a|^{2})^{\gamma+k}\sum_{l=k}^{\infty}
  \frac{\Gamma(l+\gamma)}{(l-k)!\Gamma(\gamma+k)}l
  ^{\gamma}\bar{a}^{l-k}z^{l}\bigg)\bigg\|_{H_{\nu}^
  {\infty}}\nonumber\\
 &\le &(1-|a|^{2})^{\gamma+k}\sum_{l=k}^{\infty}\frac{\Gamma
 (l+\gamma)}{(l-k)!\Gamma(\gamma+k)}l^{\gamma}|a|^{l-k}
 \|S^{n}_{\vec{u},\tau}(p_{l})\|_{H_{\nu}^{\infty}}\nonumber\\
 &\leq &(1-|a|^{2})^{\gamma+k}\sum_{l=k}^{N}\frac{\Gamma(l+
  \gamma)}{(l-k)!\Gamma(\gamma+k)}l^{\gamma}|a|^{l-k}
  \|S^{n}_{\vec{u},\tau}(p_{l})\|_{H_{\nu}^{\infty}}\nonumber\\
 &\quad &+(1-|a|^{2})^{\gamma+k}\bigg(\sum_{l=N+1}^{\infty}\frac{
 \Gamma(l+\gamma)}{(l-k)!\Gamma(\gamma+k)}l^{\gamma}|a|^{l-k
 }\bigg)\sup_{n>N}n^{\gamma}\|S^{n}_{\vec{u},\tau}(p_{n})\|_{H_{\nu}^{\infty}}
 \end{eqnarray}
Since $\gamma>0$ and $N$ is arbitrary, by (\ref{equ})  we have that
$$\lim_{|a|\rightarrow 1}\big\|S^{n}_{\vec{u},\tau}(f_{a}(a-\sigma_{a})^{k})\big\|_{H_{\nu}^{\infty}}\leq\sup_{n>N}\big\|S_{\vec{u},\tau}(p_{n})\big\|_{H_{\nu}^{\infty}}$$
implies
$$\lim_{|a|\rightarrow 1}\big\|S^{n}_{\vec{u},\tau}(f_{a}(a-\sigma_{a})^{k})\big\|_{H_{\nu}^{\infty}}\leq\lim_
{n\rightarrow \infty}\big\|S^{n}_{\vec{u},\tau}(p_{n})\big\|_{H_{\nu}^{\infty}}=0.$$

$(iv)\Rightarrow (iii)$ Suppose that $\lim_{|a|\rightarrow 1}\big\|S^{n}_{\vec{u},\tau}(f_{a}(a-\sigma_{a})^{k})\big\|_{H_{\nu}^{\infty}}=0$. We have to show that $M_{k}=\displaystyle\lim_{|\tau(z)|\rightarrow 1}\frac{|u_{k}(z)|}{(1-|\tau(z)|^2)^{k+\gamma}}=0$, for $0\leq k\leq n$. For $k=n$, for all $z\in \mathbb{D},$ consider $g_{n}(z)=\big(f_{\tau(z)}\sigma_{\tau(z)}^{n}\big)(z)$ such that $g_{n}^{(1)}
(\tau(z))=g_{n}^{(2)}
(\tau(z))=\cdots=g_{n}^{(n-1)}
(\tau(z))=0$ and $g_{n}^{(n)}
(\tau(z))=\displaystyle\frac{n!}{\left(1-|\tau(z)|^{2}\right)
^{n+\gamma}}$. Thus
$$\big|S^{n}_{\vec{u},\tau}(g_{n})(z)\big|=\bigg|\sum_{i=0}^{n}u_{i}(z)g_{n}^{(i)}
(\tau(z))\bigg|=\frac{n!|u_{n}(z)|}{\left(1-|\tau(z)|^{2}\right)
^{n+\gamma}}$$
This implies that,
\begin{equation}\label{6b}
  G_n=\lim_{|\tau(z)|\rightarrow 1}\frac{|u_{n}(z)|}{(1-|\tau(z)|^2)^{n+\gamma}} \lesssim\lim_{|\tau(z)|\rightarrow 1}\big\|S^{n}_{\vec{u},\tau}\left(f_{\tau(z)}\sigma_{\tau(z)}^{n}\right)\big\|_{H_{\nu}^{\infty}}=0 .
\end{equation}
Next, for $k=n-1$, we consider $g_{n-1}(z)=\big(f_{\tau(z)}\sigma_{\tau(z)}^{n-1}\big)
(z)$, $z\in \mathbb{D}$ such that $g_{n-1}^{(1)}
(\tau(z))=g_{n-1}^{(2)}
(\tau(z))=\cdots=g_{n-1}^{(n-2)}
(\tau(z))=0$ and $g_{n-1}^{(n-1)}
(\tau(z))=\displaystyle\frac{(n-1)!}{\left(1-|\tau(z)|^{2}\right)
^{n-1+\gamma}}$. By lemma \ref{3l}, $\|g_{n-1}\|_{X}\leq 1$. Thus by lemma \ref{2l}, we get that
\begin{eqnarray*}
   \big|S^{n}_{\vec{u},\tau}(g_{n-1})(z)\big|&\geq &\big|u_
   {n-1}(z)g_{n-1}^{(n-1)}(\tau(z))\big|-\big|
   u_{n}(z)g_{n-1}^{(n)}(\tau(z))\big|\nonumber\\
  &\geq & \frac{(n-1)!|u_{n-1}(z)|}{\left(1-|\tau(z)|^{2}\right)^{n-1+
  \gamma}}-\frac{C|u_{n}(z)|\|g_{n-1}\|_X}{\left(1-|\tau(z)|^{2}\right)^{n+\gamma}}.
\end{eqnarray*}
This implies that
\begin{eqnarray}\label{7b}
  G_{n-1}&= & \lim_{|\tau(z)|\rightarrow 1}\frac{\nu(z)|u_{n-1}(z)|}{(1-|\tau
   (z)|^2)^{n-1+\gamma}}\nonumber\\
   &\lesssim & C\lim_{|\tau(z)|\rightarrow 1} \frac{\nu(z)|u_{n}(z)|\|g_{n-1}\|_X}
  {\left(1-|\tau(z)|^{2}\right)^{n+\gamma}}+\lim_{|\tau(z)|\rightarrow 1}\nu(z)\big|S^{n}_{\vec{u},\tau}(g_{n-1})(z)\big|\nonumber\\
   &\leq & CG_n+\lim_{|\tau(z)|\rightarrow1}\big\|S^{n}_{\vec{u},\tau}(f_{
   \tau(z)}\sigma_{\tau(z)}^{n-1})\big\|_{H_{\nu}^{\infty}}
   \nonumber\\&= &0
\end{eqnarray}
Using (\ref{6b}) and (\ref{7b}), we can proceed for $k=n-2$ and we find that
\begin{equation}
 G_{n-2}= \lim_{|\tau(z)|\rightarrow 1}\frac{\nu(z)|u_{n-2}(z)|}{(1-|\tau(z)|^2)^{n-2+\gamma}}=0 \nonumber
\end{equation}
Further, assume that
\begin{equation}
  G_j=\lim_{|\tau(z)|\rightarrow 1}\frac{\nu(z)|u_{j}(z)|}{(1-|\tau(z)|^2)^{j+\gamma}}=0 \nonumber
\end{equation}
holds for $j>k$. For the proof completion of this implication, we prove the condition for $k$. Consider $g_{k}(z)=\big(f_{\tau(z)}\sigma_{\tau(z)}^{j}\big)(z)$ such that $g_{k}^{(1)}
(\tau(z))=g_{k}^{(2)}
(\tau(z))=\cdots=g_{k}^{(k-1)}
(\tau(z))=0$ and $g_{k}^{(k)}
(\tau(z))=\displaystyle\frac{k!}{\left(1-|\tau(z)|^{2}\right)
^{k+\gamma}}$. By lemma[\ref{3l}], $\|g_{k}\|_{X}\leq 1$. Thus by lemma(\ref{2l}), we get that
\begin{eqnarray*}
   \big|S^{n}_{\vec{u},\tau}(g_{k})(z)\big|&\geq &\big|
   u_{k}(z)g_{k}^{(k)}(\tau(z))\big|-\sum_{i=k+1}^{n}\big|
   u_{i}(z)g_{k}^{(i)}(\tau(z))\big|\nonumber\\
  &\geq & \frac{k!|u_{k}(z)|}{\left(1-|\tau(z)|^{2}\right)^{k+\gamma}}
  -\sum_{i=k+1}^{n}\frac{C|u_{i}(z)\big\|g_{k}\big\|_X}
  {\left(1-|\tau(z)|^{2}\right)^{i+\gamma}} .
\end{eqnarray*}
This implies that
\begin{eqnarray*}
G_k&= & \lim_{|\tau(z)|\rightarrow 1}\frac{\nu(z)|u_{k}(z)|}{(1-|\tau(z)|^2)^{k+\gamma}}\nonumber\\ &\lesssim & \sum_{i=k+1}^{n}\lim_{|\tau(z)|\rightarrow 1}\frac{C\nu(z)|u_{i}(z)|\|g_{k}\|_X}
  {\left(1-|\tau(z)|^{2}\right)^{i+\gamma}} +\lim_{|\tau(z)|\rightarrow 1}\nu(z)\big|S^{n}_{\vec{u},\tau}(g_{k})(z)\big|\nonumber\\
  &\lesssim &\sum_{i=k+1}^{n}CG_k+\lim_{|\tau(z)|\rightarrow 1}\|S^{n}_{\vec{u},\tau}(g_{k})\|_{H_{\nu}^{\infty}}\nonumber\\
&= &0
\end{eqnarray*}
$(iii)\Rightarrow (ii)$ This implication is trivial one.\\
$(ii)\Rightarrow (i)$ Suppose that the conditions holds. Let $\epsilon>0$, $r$ lies in $(0,1)$ and $|\tau(z)|>r$, by assumption we can write for $1\leq k\leq n$ \begin{equation}\frac{\nu(z)|u_{k}(z)|}{(1-|\tau(z)|^2)^
{k+\gamma}}< \epsilon\nonumber .\end{equation}
For $z\in \mathbb{D}$ with $|\tau(z)|\le r$, we have that
\begin{equation}
  \nu(z)\big|S^{n}_{\vec{u},\tau}(f_{n})(z)\big|\leq \sum_{k=0}^{n}\nu(z)|u_{k}(z)|\big|f_{n}^{k}(\tau(z))\big|\nonumber
\end{equation}
converges uniformly to zero by lemma \ref{4l} .\\
Next for $|\tau(z)|>r$, by lemma \ref{2l}, we have that
\begin{eqnarray*}
  \nu(z)\big|S^{n}_{\vec{u},\tau}(f_{n})(z)\big| &\leq & \sum_{k=0}^{n}\nu(z)|u_{k}(z)|\big|f_{n}^{k}(\tau(z))\big|\nonumber\\
  &\leq & \sum_{k=0}^{n}\frac{C\nu(z)|u_{k}(z)|\|f_{n}\|_{X}}{\left(1-|\tau(z)|^{2}\right)^{k+{\gamma}}}\nonumber\\
  \Rightarrow \limsup_{n\rightarrow \infty}\sup_{|\tau(z)|>r}\big\|S^{n}_{\vec{u},\tau}(f_{n}(z))\big\|_{H_{\nu}^{\infty}}&\lesssim \epsilon.
\end{eqnarray*}
Since $\epsilon$ is arbitrary, we get that
$$\big\|S^{n}_{\vec{u},\tau}(f_{n})(z)\big\|_{H_{\nu}^{\infty}}\longrightarrow 0$$
uniformly for each $z\in \mathbb{D}$. This proof holds for $X=A^{-\alpha}$, $A_{\alpha}^{p}$,$H^{p}$.

Proof of the part(B):
The proof of the implications (iv)$\Rightarrow $ (iii)$\Rightarrow$ (ii)$\Rightarrow$ (i)is the same as the proof of part (A). The proof of the implication (i)$\Rightarrow$ (iv) follows from lemma \ref{3l}.
This completes the proof of the Theorem \ref{2t}.
\end{proof}

\section{Order boundedness of $ S^{n}_{\vec{u},\tau}$}\label{sec:prelimi}
This section is devoted to characterizing the order boundedness of the finite sum of the weighted composition differentiation operator.
\begin{theorem}\label{3t}\begin{enumerate}
                       \item [{(A)}] Suppose that $X=H^{\infty}, A^{-\alpha}$. Then the following conditions are equivalent: \begin{enumerate}
\item [{(i)}] $S^{n}_{\vec{u},\tau}: X \longrightarrow L^q(\mu)$ is order bounded.
\item [{(ii)}]$\displaystyle\sum_{k=0}^{n}Q_{k}=\sup_{z\in\mathbb{D}}\sum_{k=0}^{n}\frac{|u_{k}
    (z)|}{(1-|\tau(z)|^2)^{k+\gamma}}\in L^q(\mu)$.
\item[{(iii)}]$Q_{k}=\displaystyle\sup_{z\in\mathbb{D}}\frac{|u_{k}(z)|}{(1-|\tau(z)|^2)^{k+
    \gamma}}\in L^q(\mu) $, for $0\leq k\leq n$.
\item [{(iv)}]$\displaystyle\sup_{a \in \mathbb{D}}|S^{n}_{\vec{u},\tau}(f_{a}(\sigma_{a})^{k})|\in L^q(\mu)$ for $0\leq k\le n$.
\item [{(v)}]$\displaystyle\sup_{n \in \mathbb{N}}n^{\gamma}|S^{n}_{\vec{u},\tau}(p_{n})| \in L^q(\mu)$ where $p_{n}(z)=z^{n}$, $z\in \mathbb{D}$.
\end{enumerate}

                       \item [{(B)}] Conditions (i)-(iv) are equivalent for $X=A_{\alpha}^{p}, H^{p}$.
                     \end{enumerate}
\end{theorem}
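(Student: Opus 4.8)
The plan is to establish the cycle $(i)\Rightarrow(v)\Rightarrow(iv)\Rightarrow(iii)\Rightarrow(ii)\Rightarrow(i)$ for part (A), reusing almost verbatim the pointwise estimates already built in the proofs of Theorems \ref{1t} and \ref{2t}; the only genuinely new observation is that those estimates are honest pointwise inequalities between functions of $z$, so membership in $L^q(\mu)$ propagates along them. The operative reformulation is that order boundedness of $S^n_{\vec u,\tau}:X\to L^q(\mu)$ is equivalent to $z\mapsto\sup_{f\in B_X}|S^n_{\vec u,\tau}(f)(z)|$ belonging to $L^q(\mu)$, and this is what I will exploit throughout.

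For $(i)\Rightarrow(v)$ I would note that by Lemma \ref{1l} the normalised monomials $p_n/\|p_n\|_X$ lie in $B_X$, so order boundedness furnishes a single dominating $h\in L^q(\mu)$, which after rescaling is precisely condition $(v)$. The implication $(v)\Rightarrow(iv)$ reuses the series computations: expanding $f_a$ and $f_a(a-\sigma_a)^k$ via $(\ref{eq})$, $(\ref{equ})$ and $(\ref{equa})$ and applying $S^n_{\vec u,\tau}$ termwise yields, exactly as in $(\ref{2a})$, $(\ref{4a})$ and $(\ref{5a})$, the \emph{pointwise} bound
\begin{equation*}
\sup_{a\in\mathbb D}\big|S^n_{\vec u,\tau}\big(f_a\sigma_a^k\big)(z)\big|\;\lesssim\;\sup_{n\in\mathbb N}n^\gamma\big|S^n_{\vec u,\tau}(p_n)(z)\big|\qquad(z\in\mathbb D).
\end{equation*}
Since the right-hand side lies in $L^q(\mu)$ by $(v)$, so does the left-hand side, giving $(iv)$.

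The heart of the argument is $(iv)\Rightarrow(iii)$, which I would carry out by the downward induction on $k=n,n-1,\dots,0$ used for $(d)\Rightarrow(c)$ in Theorem \ref{1t}. Fixing $z$ and testing with $g_k(\cdot)=f_{\tau(z)}(\cdot)\,\sigma_{\tau(z)}^k(\cdot)$, whose derivatives at $\tau(z)$ vanish below order $k$ and equal $k!/(1-|\tau(z)|^2)^{k+\gamma}$ at order $k$, Lemma \ref{2l} gives
\begin{equation*}
Q_k(z)=\frac{|u_k(z)|}{(1-|\tau(z)|^2)^{k+\gamma}}\;\lesssim\;\big|S^n_{\vec u,\tau}(g_k)(z)\big|+\sum_{i=k+1}^{n}Q_i(z)\;\le\;\sup_{a\in\mathbb D}\big|S^n_{\vec u,\tau}(f_a\sigma_a^k)(z)\big|+\sum_{i=k+1}^{n}Q_i(z),
\end{equation*}
because $|S^n_{\vec u,\tau}(g_k)(z)|$ is the value at $a=\tau(z)$ of $|S^n_{\vec u,\tau}(f_a\sigma_a^k)(z)|$. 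Starting from $k=n$, where the sum is empty and $(iv)$ gives $Q_n\in L^q(\mu)$ directly, the induction hypothesis $Q_i\in L^q(\mu)$ for $i>k$ together with $(iv)$ forces $Q_k\in L^q(\mu)$. The obstacle here is purely bookkeeping: one must keep every estimate as an inequality of functions of $z$ (never passing to a supremum over $z$ as in the boundedness theorem) so that closure of $L^q(\mu)$ under finite sums can be invoked.

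Finally $(iii)\Rightarrow(ii)$ is immediate from closure of $L^q(\mu)$ under finite sums, and $(ii)\Rightarrow(i)$ is the direct estimate: for $f\in B_X$, Lemma \ref{2l} gives $|S^n_{\vec u,\tau}(f)(z)|\le\sum_{k=0}^n C|u_k(z)|/(1-|\tau(z)|^2)^{k+\gamma}=C\sum_{k=0}^nQ_k(z)$, so $h:=C\sum_{k=0}^nQ_k\in L^q(\mu)$ dominates the whole unit ball and $S^n_{\vec u,\tau}$ is order bounded. For part (B) the chain $(iv)\Rightarrow(iii)\Rightarrow(ii)\Rightarrow(i)$ is unchanged; only $(i)\Rightarrow(iv)$ differs, and there I would invoke Lemma \ref{3l} (which guarantees $\|f_a\sigma_a^k\|_X\le1$, hence $f_a\sigma_a^k\in B_X$) to dominate $\sup_a|S^n_{\vec u,\tau}(f_a\sigma_a^k)|$ by the order-bounding function $h\in L^q(\mu)$. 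Condition $(v)$ is absent from part (B) precisely because Lemma \ref{1l} gives the monomials the different normalisation $\|p_n\|_X\asymp n^{-(\gamma-1/p)}$, which no longer matches the $n^\gamma$ weight in $(v)$.
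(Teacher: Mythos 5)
Your proposal is correct and follows essentially the same route as the paper: the cycle $(i)\Rightarrow(v)\Rightarrow(iv)\Rightarrow(iii)\Rightarrow(ii)\Rightarrow(i)$, reusing the pointwise estimates from Theorem \ref{1t} for $(v)\Rightarrow(iv)$, the downward induction with the test functions $g_k=f_{\tau(z)}\sigma_{\tau(z)}^k$ for $(iv)\Rightarrow(iii)$, the direct Lemma \ref{2l} estimate for $(ii)\Rightarrow(i)$, and Lemma \ref{3l} for $(i)\Rightarrow(iv)$ in part (B). Your explicit remark that all estimates must be kept as pointwise inequalities in $z$ (reading $Q_k$ as a function of $z$ rather than a supremum) so that $L^q(\mu)$-membership propagates is exactly the reading the paper's proof uses, stated more carefully than the paper itself does.
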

\begin{proof}
  Proof of the part$(A)$: \\
  (i)$\Rightarrow $ (v) Assume that $S^{n}_{\vec{u},\tau}: X \longrightarrow H_{\nu}^{\infty}$ is order bounded. By definition, there is an $g\in L^q(\mu)$ such that $|S^{n}_{\vec{u},\tau}(f)|\leq g$ for every $f\in B_{X}$. By lemma \ref{1l}, we have that for $n\rightarrow \infty$
  \begin{equation}
    \sup_{n \in \mathbb{N}}n^{\gamma}|S^{n}_{\vec{u},\tau}(p_{n})|\asymp \sup_{n \in \mathbb{N}}\bigg|\frac{S^{n}_{\vec{u},\tau}(p_n)}{\|p_n\|}\bigg|\leq g \nonumber
  \end{equation}
  which implies that $\sup_{n \in \mathbb{N}}n^{\gamma}|S^{n}_{\vec{u},\tau}(p_{n})|\in L^q(\mu)$.

  (v)$\Rightarrow $ (iv) To complete the proof of this implication,  we need only to prove
  \begin{equation}
   \sup_{a \in \mathbb{D}}|S^{n}_{\vec{u},\tau}(f_{a}(\sigma_{a})^{k})|\leq \sup_{n \in \mathbb{N}}n^{\gamma}|S^{n}_{\vec{u},\tau}(p_{n})|\;\;\;\;\; \textit{for all } 0\leq k\leq n \nonumber
  \end{equation}
  which is already proved in the proof of the theorem \ref{1t}.

(iv)$\Rightarrow $ (iii) Assume that $\sup_{a \in \mathbb{D}}|S^{n}_{\vec{u},\tau}(f_{a}(\sigma_{a})^{k})|\in L^q(\mu)$, for each $0\leq k\leq n$.
We have to show that, for each $0\leq k\leq n$,
$Q_k=\displaystyle\frac{|u_{k}(z)|}{\left(1-|\tau(z)|^{2}\right)^{k+\gamma}}\in L^q(\mu)$
For $k=n$, we consider $g_{n}(z)=\big(f_{\tau(z)}\sigma_{\tau(z)}^{n}\big)(z)$, for all $z\in \mathbb{D}$ such that $g_{n}^{(1)}
(\tau(z))=g_{n}^{(2)}
(\tau(z))=\cdots=g_{n}^{(n)}
(\tau(z))=0$ and $g_{n}^{(n)}
(\tau(z))=\displaystyle\frac{n!}{\left(1-|\tau(z)|^{2}\right)
^{n+\gamma}}$. By lemma \ref{3l}, $\|g_{n-1}\|_{X}\leq 1$. Thus, by lemma \ref{2l}, we have that
\begin{equation}\label{9c}
\sup_{a \in \mathbb{D}}\big|S_{\vec{u},\tau}(f_{a}(\sigma_{a})^{k})\big|\geq\big|
S^{n}_{\vec{u},\tau}(g_{n})(z)\big|=\bigg|\sum_{i=0}^{n}u_{i}(z)g_{n}^{(i)}
(\tau(z))\bigg|=\displaystyle\frac{n!|u_{n}(z)|}{\left(1-|\tau(z)|^{2}\right)
^{n+\gamma}}
\end{equation}
which implies that, $Q_{n}=\displaystyle\frac{|u_{n}(z)|}{\left(1-|\tau(z)|^{2}\right)
^{n+\gamma}}\in L^q(\mu)$.
Next, for $k=n-1$, we consider $g_{n-1}(z)=\big(f_{\tau(z)}\sigma_{\tau(z)}^{n-1}\big)
(z)$, $z\in \mathbb{D}$ such that $g_{n-1}^{(1)}
(\tau(z))=g_{n-1}^{(2)}
(\tau(z))=\cdots=g_{n-1}^{(n-2)}
(\tau(z))=0$ and $g_{n-1}^{(n-1)}
(\tau(z))=\displaystyle\frac{(n-1)!}{\left(1-|\tau(z)|^{2}\right)
^{n-1+\gamma}}$.  By lemma \ref{3l}, $\|g_{n-1}\|_{X}\leq 1$. Thus, by lemma \ref{2l}, we have that
\begin{eqnarray}\label{10c}
  \big|S^{n}_{\vec{u},\tau}(g_{n-1})(z)\big|&= & \bigg|\sum_{i=0}^{n}u_{i}(z)g_{n-1}^{(i)}(\tau(z))\bigg| \nonumber\\
  &\geq & \big|u_{n-1}(z)g_{n-1}^{(n-1)}(\tau(z))\big|-\big|u_{n}
  (z)g_{n-1}^{(n)}(\tau(z))\big|\nonumber\\
  &\geq & \frac{(n-1)!|u_{n-1}(z)|}{\left(1-|\tau(z)|^{2}\right)^
  {n-1+\gamma}}-\frac{C|u_{n}(z)|\|g_{n-1}\|_X}
  {\left(1-|\tau(z)|^{2}\right)^{n+\gamma}}.
\end{eqnarray}
By (\ref{9c}) and (\ref{10c}), we have that
\begin{eqnarray}\label{1d}
  Q_{n-1}&= &\frac{|u_{n-1}(z)|}{\left(1-|\tau(z)|^{2}\right)^
  {n-1+\gamma}}\nonumber\\
  &\lesssim & \frac{C|u_{n}(z)|\|g_{n-1}\|_X}
  {\left(1-|\tau(z)|^{2}\right)^{n+\gamma}}+\big|S^{n}
  _{\vec{u},\tau}(g_{n-1})(z)\big|\nonumber\\ &\leq &C Q_n+ \sup_{a\in\mathbb{D}}\big|S^{n}_{\vec{u},\tau}(f_{a}(\sigma_{a})^{n-1})\big|.
\end{eqnarray}
which implies that $ Q_{n-1}\in L^q(\mu)$.\\
Using  lemma \ref{2l}, (\ref{9c}) and (\ref{1d}), similarly we can proceed for $k=n-2$, and we find that
 \begin{equation}\label{2d}Q_{n-2}\in L^q(\mu)
 \end{equation}
 Further, for $j>k$ assume that
 \begin{equation}\label{3d}
   Q_{j}\in L^q(\mu)
 \end{equation}
For $k$, consider $g_{k}(z)=\big(f_{\tau(z)}\sigma_{\tau(z)}^{k}\big)(z)$ such that $g_{k}^{(1)}
(\tau(z))=g_{k}^{(2)}
(\tau(z))=\cdots=g_{k}^{(k-1)}
(\tau(z))=0$ and $g_{k}^{(k)}
(\tau(z))=\displaystyle\frac{k!}{\left(1-|\tau(z)|^{2}\right)
^{k+\gamma}}$. Thus, by lemma \ref{2l}, we have
\begin{eqnarray}\label{4d}
 \big|S^{n}_{\vec{u},\tau}(g_{k})(z)\big|&= & \big|\sum_{i=0}^{n}u_{i}(z)g_{k}^{(i)}(\tau(z))\big| \nonumber\\
  &\geq & \big|u_{k}(z)g_{k}^{(k)}(\tau(z))\big|-\sum_{i=k+1}^{n}\bigg|u_{i}(z)g_{k}^
  {(i)}(\tau(z))\big|\nonumber\\
  &\geq & \frac{k!|u_{k}(z)|}{\left(1-|\tau(z)|^{2}\right)^{k+
  \gamma}}-\sum_{i=k+1}^{n}\frac{C|u_{i}(z)|\|g_{k}\|_X}
  {\left(1-|\tau(z)|^{2}\right)^{i+\gamma}}.
\end{eqnarray}
Using (\ref{9c}), (\ref{1d}), (\ref{2d}), (\ref{3d}) and (\ref{4d}), we find that
\begin{eqnarray}
  Q_{k}&=&\frac{|u_{k}(z)|}{\left(1-|\tau(z)|^{2}\right)^{k+
  \gamma}}\nonumber\\
  &\lesssim & \sum_{i=k+1}^{n}\frac{C|u_{i}(z)|\|g_{k}\|_X}
  {\left(1-|\tau(z)|^{2}\right)^{i+\gamma}} +\big|S^{n}_{\vec{u},\tau}(g_{k})(z)\big|\nonumber\\
  &\leq &\sum_{i=k+1}^{n} C Q_i+ \sup_{a\in\mathbb{D}}\big|S^{n}_{\vec{u},\tau}(f_{a}(\sigma_{a})^{k})\big|\nonumber
\end{eqnarray}
which implies that $Q_k \in L^q(\mu)$.\\
$(c)\Rightarrow (b)$ This implication is trivial one.\\
$(b)\Rightarrow (a)$ Assume that $\sum_{k=0}^{n}Q_{k}=\displaystyle\sum_{k=0}^{n}\frac
{|u_{k}(z)|}{(1-|\tau(z)|^2)^{k+\gamma}}\in L^q(\mu)$.
 By lemma \ref{2l}, we have that
\begin{eqnarray}
 \big|S^{n}_{\vec{u},\tau}(f)(z)\big|&= & \bigg|\sum_{k=0}^{n}u_{k}(z)f^{(k)}(\tau(z))\bigg| \nonumber\\
  &\leq & \sum_{k=0}^{n}|u_{k}(z)|\big|f^{(k)}(\tau(z))\big| \nonumber\\
   &\leq &\sum_{k=0}^{n}\frac{C|u_{k}(z)|\|f\|_{X}}{\left(1-
   |\tau(z)|^{2}\right)^{k+\gamma}}\nonumber
\end{eqnarray}
This implies that $S^{n}_{\vec{u},\tau}:X \longrightarrow H_{\nu}^{\infty}$ is order bounded.\\
Proof of the part(B):\\
The proofs of the implications (iv)$\Rightarrow $ (iii)$\Rightarrow$ (ii)$\Rightarrow$ (i) are the same as the proof of part (A). The proof of the implication (i)$\Rightarrow$ (iv) follows from lemma \ref{3l}.
This completes the proof of the theorem \ref{3t}.
\end{proof}

\subsection*{Acknowledgment}
We would like to thank the referees for several helpful comments and suggestions.\\
The  first author is thankful to NBHM(DAE)(India) for the research project  (Grant No. 02011/30/2017/R\&{D} II/12565).\\ The second author is thankful to  CSIR (India) for the Junior Research Fellowship (File no. 09/1231(0001)/2019-EMR-I).

\end{document}